\theoremstyle{plain}
\newtheorem{theorem}[equation]{Theorem}
\newtheorem{proposition}[equation]{Proposition}
\newtheorem{lemma}[equation]{Lemma}
\newtheorem{corollary}[equation]{Corollary}
\newtheorem{definition}[equation]{Definition}
\theoremstyle{definition}
\numberwithin{equation}{section}
\newcommand{\sjump}{\hskip .2 cm}
\newcommand{\dbarstar}{\overline{\partial}^{\star}}
\newcommand{\psum}{\sideset{}{^{\prime}}{\sum}}
\newcommand{\dbar}{\overline \partial}
\newcommand{\ncap}{\operatorname{cap}}
\newcommand{\card}{\operatorname{card}}
\newcommand{\dom}{\operatorname{Dom}}
\newcommand{\SHU}{\operatorname{\overline{SH}}}
\begin{document}

\title[The Poincar{\'e} inequality]{On the Poincar\'{e} inequality on 
open sets in $\mathbb{R}^n$}
\author{A.-K. Gallagher\orcidlink{0000-0001-5269-2879}}
\date{\today}
\address{Gallagher Tool \& Instrument LLC, Redmond, WA 98052, USA}
\email{anne.g@gallagherti.com}

\subjclass{35P15, 31B99,32W05}
\keywords{Poincar\'{e} inequality, Friedrichs’ inequality, Dirichlet--Laplacian, capacity}
\begin{abstract}
  We show that the Poincar\'{e} inequality holds on an open set $D\subset\mathbb{R}^n$ if and only if $D$ admits a smooth, 
  bounded function whose Laplacian has a positive lower bound on $D$. Moreover, we prove that
  the existence of such a bounded, strictly subharmonic function on $D$ is equivalent to the finiteness of the strict inradius of $D$ 
  measured with respect to the Newtonian capacity.
  We also obtain a sharp upper bound, in terms of this notion of inradius, for the smallest eigenvalue of the Dirichlet--Laplacian.
\end{abstract}
\dedicatory{In memory of Peter Duren, who was a faithful friend and\\
 a wonderful companion on the historical math adventure we went on \cite{GDK081,GDK082}.}
\maketitle


\section{Introduction}

  The Poincar\'{e} inequality is said to hold on an open set $D\subset\mathbb{R}^n$ if there exists a constant $C>0$ such that
  \begin{align}\label{E:PD}
    \|f\|_{L^2(D)}\leq C\|\nabla f\|_{L^2(D)}\sjump\forall f\in\mathcal{C}^\infty_c(D),
  \end{align}
  where $\|.\|_{L^2(D)}$ is $L^2$-norm on $D$ and $\nabla$ denotes the gradient.
  
  The purpose of this paper is to characterize those open sets in $\mathbb{R}^n$, $n\geq 3$, for which the Poincar\'{e} inequality holds, in 
  terms of potential-theoretic 
  properties. One of these properties is described through the strict Newtonian inradius, $\rho_D$, of the set $D$, measured 
  with respect to the Newtonian capacity:
  \begin{align}\label{D:rhoD}
    \rho_{D}:=\sup\{R\geq 0\,|\,\forall\epsilon>0\; \exists\;x\in D:\ncap\left(\mathbb{B}(x;R)\cap D^c\right)<\epsilon\},
  \end{align}
  where $\ncap$ denotes the Newtonian capacity and $\mathbb{B}(x;R)$ the ball of radius $R$ centered at $x$; see \Cref{S:prelims} 
  for definitions.

  \begin{theorem}\label{T:MainTheorem}
    Let $D\subset\mathbb{R}^n$, $n\geq 3$,  be an open set. Then the following are equivalent.
    \begin{itemize}
       \item[(1)] The Poincar\'{e} inequality holds on $D$.
       \item[(2)] There exist a bounded function $\phi\in\mathcal{C}^{\infty}(D)$ and a positive constant $c$ such that 
       $$\triangle\phi(x)\geq c\sjump\forall\;x\in D.$$ 
       \item[(3)] $\rho_{D}<\infty$.
       \item[(4)] The maximal weak extension of the exterior derivative, acting on $(n-1)$-forms,  
       has closed range in the space of square-integrable $n$-forms on $D$.
    \end{itemize}
    If $n=2m$ for some integer $m$, then (1)-(4) are also equivalent to:
    \begin{itemize}
      \item[(5)] The maximal weak extension of the Cauchy--Riemann operator, acting on $(0,m-1)$-forms, 
      has closed range in the space of square-integrable $(0,m)$-forms on $D$.
    \end{itemize}
  \end{theorem}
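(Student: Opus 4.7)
The plan is to prove the cycle $(2)\Rightarrow(1)\Rightarrow(3)\Rightarrow(2)$, and then obtain $(4)$ and $(5)$ from $(1)$ via a Hodge-theoretic reduction.

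For $(2)\Rightarrow(1)$ I would use an Allegretto--Piepenbrink-type substitution. Choose $C>\sup_D\phi$ and set $u:=C-\phi$, so that $u>0$ on $D$ and $-\triangle u=\triangle\phi\geq c$. For $f\in\mathcal{C}^\infty_c(D)$ real-valued, factor $f=uv$ with $v:=f/u$ compactly supported in $D$, expand $|\nabla(uv)|^2$, and integrate the cross term $\int 2uv\,\nabla u\cdot\nabla v\,dx$ by parts; the identity collapses to
\begin{equation*}
\int_D|\nabla f|^2\,dx \;=\; \int_D u^2|\nabla v|^2\,dx \;+\; \int_D(-\triangle u)\,u v^2\,dx \;\geq\; c\int_D\frac{f^2}{u}\,dx \;\geq\; \frac{c}{C-\inf_D\phi}\,\|f\|_{L^2(D)}^{2},
\end{equation*}
which is $(1)$. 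For the converse $(1)\Rightarrow(3)$ I would argue by contradiction: if $\rho_D=\infty$, then for each $R>0$ and $\delta>0$ there is $x\in D$ with $\ncap(\mathbb{B}(x,R)\cap D^c)<\delta$. Multiplying a standard radial cutoff $\chi_R$ supported in $\mathbb{B}(x,R)$ by $(1-w_{x,R})$, where $w_{x,R}$ is the equilibrium potential of $\mathbb{B}(x,R)\cap D^c$, and regularizing, produces $f_R\in\mathcal{C}^\infty_c(D)$ with $\|f_R\|_{L^2}\gtrsim R^{n/2}$ and $\|\nabla f_R\|_{L^2}\lesssim R^{(n-2)/2}+\sqrt{\delta}$; letting first $\delta\to 0$ and then $R\to\infty$ contradicts the Poincar\'e inequality.

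The main technical step is $(3)\Rightarrow(2)$. Let $T_D$ denote the torsion function of $D$, i.e., the unique variational solution of $-\triangle T_D=1$ in $D$ with zero Dirichlet data in the appropriate capacitary $H^1_0$ sense. Elliptic regularity gives $T_D\in\mathcal{C}^\infty(D)$, and a Maz'ya-type capacitary maximum-principle argument yields the pointwise bound $\|T_D\|_{L^\infty(D)}\lesssim\rho_D^{\,2}$: for $x\in D$ and $R>\rho_D$, the equilibrium potential of $\mathbb{B}(x,R)\cap D^c$ admits a positive lower bound at $x$ depending only on $R$, and comparing $T_D$ against the subsolution $R^{2}-|{\cdot}-x|^{2}$ weighted by this potential produces a bound $T_D(x)\leq C(R)$ independent of $x$. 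Setting $\phi:=-T_D$ then gives $(2)$ with $c=1$.

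For $(1)\Leftrightarrow(4)$ I would use Hodge duality: the Hodge star identifies $L^2_{(n)}(D)$ with $L^2(D)$, and under this identification the Hilbert-space adjoint of the maximal extension of $d\colon L^2_{(n-1)}(D)\to L^2_{(n)}(D)$ is, up to sign, the maximal gradient $\nabla\colon L^2(D)\to L^2_{(1)}(D)$. By the closed-range theorem, $d$ has closed range iff this adjoint does, which on the orthogonal complement of its (trivial or constant) kernel amounts to the a priori estimate $\|f\|_{L^2}\leq C\|\nabla f\|_{L^2}$; density of $\mathcal{C}^\infty_c(D)$ in the domain of the maximal $\nabla$ makes this equivalent to $(1)$. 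In the even-dimensional case $n=2m$, the equivalence $(1)\Leftrightarrow(5)$ is obtained in exactly the same way, using that on top-degree $(0,m)$-forms the $\dbar$-Laplacian $\dbar\dbarstar$ acts as a nonzero scalar multiple of the real Laplacian on the single coefficient function. The step I anticipate as the main obstacle is $(3)\Rightarrow(2)$: deriving the quantitative bound $\|T_D\|_\infty\lesssim\rho_D^{\,2}$ is a delicate global potential-theoretic estimate, and it is precisely where the use of the \emph{strict} Newtonian inradius (rather than an ordinary Euclidean inradius) becomes essential.
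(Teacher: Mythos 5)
Your overall architecture --- the cycle (2)$\Rightarrow$(1)$\Rightarrow$(3)$\Rightarrow$(2) plus a Hodge-theoretic reduction of (4) and (5) to (1) --- matches the paper's. The substitution $u=C-\phi$, $f=uv$ in your (2)$\Rightarrow$(1) is precisely the Cheng--Yau/Lee argument the paper records as \Cref{L:Lee}, and your treatment of (4) and (5) is in substance the paper's Sections \ref{S:(1)<=>(4)} and \ref{S:(1)<=>(5)}. Your (1)$\Rightarrow$(3) is genuinely different: the paper rescales capacitarily small pockets of $D^c$ into the unit ball and proves the eigenvalue continuity $\lambda_1(\mathbb{B}\setminus K_j)\to\lambda_1(\mathbb{B})$ as $\ncap(K_j)\to 0$ (\Cref{P:eigenvalueconv}), which also yields the sharp bound of \Cref{C:bestupperbound}; your Maz'ya--Shubin-type test functions $\chi_R(1-w_{x,R})$ do give the implication (without the sharp constant), provided you replace $\mathbb{B}(x;R)\cap D^c$ by a compact subset (it need not be compact) and replace ``regularizing'' by the standard fact that a compactly supported $H^1$ function vanishing quasi-everywhere on $D^c$ belongs to $H^1_0(D)$ --- mollification does not keep the support inside $D$, so the estimate should be run in $H^1_0(D)$ rather than in $\mathcal{C}^\infty_c(D)$.

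The genuine gap is in (3)$\Rightarrow$(2). Defining $T_D$ as ``the unique variational solution of $-\triangle T_D=1$ with zero Dirichlet data'' is circular: solvability of that problem in $H^1_0(D)$ (coercivity in Lax--Milgram) is exactly the Poincar\'e inequality (1), which is not available at this point of your cycle; for a general open set with $\rho_D<\infty$ you must instead define $T_D$ by exhaustion (torsion functions $T_k$ of $D\cap\mathbb{B}(0;k)$) or as the expected exit time $\mathbb{E}_x[\tau_D]$, and then prove a bound that is uniform in the exhaustion. Moreover, the comparison you sketch does not close: in $D\cap\mathbb{B}(x;2R)$, comparing against $(4R^2-|\cdot-x|^2)/(2n)+M(1-w)$, where $w$ is the equilibrium potential of a compact subset of $\overline{\mathbb{B}(x;R)}\cap D^c$ and $M=\sup T$, gives only an estimate of the form $T(x)\le C_nR^2+(1-w(x))M$, and the lower bound $w(x)\ge c_0:=c_n\,\delta\,R^{2-n}$ depends on $\delta$ as well as on $R$. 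To conclude you need the a priori finiteness of $M_k=\sup T_k$ on the exhaustion, the absorption step $M_k\le C_nR^2+(1-c_0)M_k$, the passage to the limit $k\to\infty$, and elliptic regularity to get $\phi:=-T_D\in\mathcal{C}^\infty(D)$; none of this is in your sketch, and it is exactly the delicate point you flagged. The paper sidesteps torsion functions entirely: it places compact sets $K_m\subset D^c$ with $\ncap(K_m)\ge\delta$ in a lattice of cubes of side $2M$ and verifies directly that $\phi=\sum_{m\in\mathbb{Z}^n}e^{-4/(\delta p_m)}$, built from their equilibrium potentials $p_m$, is bounded, smooth, and satisfies $\triangle\phi\ge c$ --- an explicit construction with no circularity and no exhaustion argument.

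One further correction in (1)$\Leftrightarrow$(4): under the Hodge star, the Hilbert-space adjoint of the maximal $d_{n-1}$ is the \emph{minimal} gradient $d_{0,c}$ (domain $H^1_0(D)$), not the maximal one, and $\mathcal{C}^\infty_c(D)$ is in general not graph-norm dense in the domain of the maximal gradient, so your density assertion as written is false. With the corrected identification (the content of \Cref{L:closedrangePD}), together with the observation that $\ker d_{0,c}=\{0\}$ because $H^1_0(D)$ contains no nonzero locally constant functions, this step becomes exactly the paper's argument; the analogous kernel point also needs to be checked in your reduction for (5).
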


  Theorem \ref{T:MainTheorem} is a natural extension of results obtained by the author in joint work with Lebl and 
  Ramachandran in \cite[Theorem 1.3]{GLR19}. In 
  \cite{GLR19}, the authors introduced 
  the notion of  strict Newtonian capacity inradius for open sets in $\mathbb{R}^2$, with the logarithmic capacity 
  in place of the Newtonian capacity, and then showed that the equivalences (1)-(3) and (5) hold in that setting. The 
  authors focused on understanding the closed range property of the Cauchy--Riemann operator,
  for planar, open sets, i.e., property (5) of  \Cref{T:MainTheorem} with $m=1$, as a prelude to the more complicated question for open 
  sets in $\mathbb{C}^n$. This was carried out by employing techniques from complex analysis. 
  As such, equivalence item (4) 
  was missed, and an extension of the results in \cite{GLR19} to $\mathbb{R}^n$ for $n\geq3$ did not appear to be within reach. 
  In the proof of (1)-(3) presented here, 
  the relevance of (4) is essentially encapsulated in Lemmata \ref{L:closedrangePD} and \ref{L:weightedestimate}.

  The equivalence of (1) and (3) in \Cref{T:MainTheorem}, with $\rho_D$ replaced by the inradius of $D$, is a
  basic fact for domains which are bounded in one direction, in particular, bounded domains, and is known to hold for domains satisfying a 
  uniform exterior cone condition, see Proposition 2.1 in \cite{Sou99} and references therein. 
  Moreover, this equivalence of (1) and (3) is related to work by Maz'ya and Shubin in \cite{MaSh05}. 
  In \cite[Theorem 1.1]{MaSh05}, they characterize those open sets $D\subset\mathbb{R}^n$, $n\geq 3$, for which the 
  Poincar\'{e} inequality holds by the finiteness of a different notion of capacity inradius, $r_D$, of $D$. We note that 
  independent of the results, Theorem 1.1 in \cite{MaSh05} and 
 (1)$\Leftrightarrow$(3) of \Cref{T:MainTheorem} herein, it is straightforward to check that $r_D<\infty$ if and only if $\rho_{D}<\infty$. 
 We also note that our methods of proof of (1)$\Leftrightarrow$(3) of \Cref{T:MainTheorem}, based on our work for $n=2$ in \cite{GLR19}, 
 noticably differ from the ones employed in \cite{MaSh05}, which is partially due to using a different notion of capacity inradius, Newtonian 
 capacity instead of Wiener capacity, and 
 classical $L^2$-techniques common in complex analysis.

  An advantage of our proof of (1)$\Rightarrow$(3) and our definition of the strict Newtonian capacity inradius is that it yields a sharp lower bound for the constant $C$ in \eqref{E:PD}, 
  i.e., a sharp 
  upper bound  for the smallest eigenvalue $\lambda_1(D)$ of the Dirichlet--Laplacian, in terms of the strict Newtonian capacity inradius, 
  $\rho_{D}$, as defined in \eqref{D:rhoD}.
  \begin{corollary}\label{C:bestupperbound}
  Let $D\subset\mathbb{R}^n$, $n\geq 3$, be an open set with $\rho_{D}<\infty$. Then
    \begin{align}\label{E:bestupperbound}
      \lambda_1(D)\leq\lambda_1(\mathbb{B})\rho_{D}^{-2}.
    \end{align}
  \end{corollary}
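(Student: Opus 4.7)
\medskip
\noindent\textbf{Proof proposal.}
The plan is to use the variational characterization
\[
  \lambda_1(D)=\inf\left\{\frac{\|\nabla f\|_{L^2(D)}^2}{\|f\|_{L^2(D)}^2}\,:\,f\in\mathcal{C}^\infty_c(D),\;f\not\equiv 0\right\},
\]
together with domain monotonicity, to localize the problem to a ball of radius just below $\rho_D$ and to transplant the first Dirichlet eigenfunction of that ball into $D$ by multiplying it by $1-\psi_\epsilon$, where $\psi_\epsilon$ is a near-capacitary potential of the obstruction $\mathbb{B}(x;R)\cap D^c$. Taking $R\uparrow\rho_D$ and $\epsilon\downarrow 0$ will yield \eqref{E:bestupperbound}.

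More precisely, fix $R<\rho_D$ and $\epsilon>0$. By \eqref{D:rhoD} there exists $x\in D$ such that the compact set $E_{\epsilon}:=\overline{\mathbb{B}(x;R)}\cap D^c$ satisfies $\ncap(E_\epsilon)<\epsilon$. Let $u\in\mathcal{C}^\infty(\overline{\mathbb{B}(x;R)})$ be the positive first Dirichlet eigenfunction of $\mathbb{B}(x;R)$, normalized by $\|u\|_{L^2}=1$, so that $\|\nabla u\|_{L^2}^2=\lambda_1(\mathbb{B})/R^2$, and set $M:=\|u\|_{L^\infty}$ (a constant depending only on $n$ and $R$). Choose, by definition of Newtonian capacity, $\psi_\epsilon\in\mathcal{C}^\infty_c(\mathbb{R}^n)$ with $0\leq\psi_\epsilon\leq 1$, $\psi_\epsilon\equiv 1$ on an open neighborhood of $E_\epsilon$, and $\|\nabla\psi_\epsilon\|_{L^2}^2\leq 2\epsilon$. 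Define
\[
  v:=(1-\psi_\epsilon)u,
\]
extended by zero outside $\mathbb{B}(x;R)$. Because $\psi_\epsilon=1$ near $E_\epsilon$ and $u=0$ on $\partial\mathbb{B}(x;R)$, a standard mollification shows $v\in H^1_0(D)$, so it is admissible in the Rayleigh quotient for $\lambda_1(D)$.

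The core computation is then to compare $\|\nabla v\|_{L^2}^2/\|v\|_{L^2}^2$ with $\lambda_1(\mathbb{B})/R^2$. From $\nabla v=(1-\psi_\epsilon)\nabla u-u\nabla\psi_\epsilon$ and the elementary inequality $(a+b)^2\leq (1+\delta)a^2+(1+\delta^{-1})b^2$,
\[
  \|\nabla v\|_{L^2}^2\leq (1+\delta)\frac{\lambda_1(\mathbb{B})}{R^2}+(1+\delta^{-1})M^2\cdot 2\epsilon.
\]
For the denominator, the key input is the Sobolev embedding $H^1(\mathbb{R}^n)\hookrightarrow L^{2n/(n-2)}(\mathbb{R}^n)$ (which is exactly where $n\geq 3$ is used); combined with H\"older's inequality on $\mathbb{B}(x;R)$, it gives
\[
  \int_{\mathbb{B}(x;R)}\psi_\epsilon\,dx\leq |\mathbb{B}(x;R)|^{(n+2)/(2n)}\|\psi_\epsilon\|_{L^{2n/(n-2)}}\leq C_n R^{(n+2)/2}\sqrt{2\epsilon},
\]
so that
\[
  \|v\|_{L^2}^2\geq 1-2\int\psi_\epsilon u^2\,dx\geq 1-2M^2 C_n R^{(n+2)/2}\sqrt{2\epsilon}.
\]
Hence $\|\nabla v\|_{L^2}^2/\|v\|_{L^2}^2\to (1+\delta)\lambda_1(\mathbb{B})/R^2$ as $\epsilon\downarrow 0$. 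Letting $\delta\downarrow 0$, then $R\uparrow\rho_D$, gives \eqref{E:bestupperbound}.

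The main obstacle is the estimate $\int\psi_\epsilon u^2\,dx\to 0$: capacity only controls $\|\nabla\psi_\epsilon\|_{L^2}$, not $\|\psi_\epsilon\|_{L^2}$, so one must convert an energy bound on $\psi_\epsilon$ into an $L^1$-mass bound, and this is precisely where the Sobolev embedding (and the assumption $n\geq 3$) is indispensable. The remaining bookkeeping---approximating $u$ by $\mathcal{C}^\infty_c$ functions and verifying $v\in H^1_0(D)$---is routine.
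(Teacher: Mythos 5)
Your argument is correct in substance, but it takes a genuinely different route from the paper. The paper obtains \Cref{C:bestupperbound} as a byproduct of its proof of (1)$\Rightarrow$(3): for $M<\rho_D$ one rescales $\mathbb{B}(x;M)$ to the unit ball, replaces the small-capacity obstruction by a smoothly bounded compact set of comparable capacity (\Cref{L:smoothK}, resting on \Cref{L:smoothunion}), and invokes the eigenvalue-continuity statement \Cref{P:eigenvalueconv}, whose proof corrects the ball eigenfunction by normalized equilibrium potentials $g_j$ (harmonic, equal to $1$ on $bD_j\cap\mathbb{B}$, tending to $0$ in $L^1$, which needs the smooth boundary for continuity up to $bD_j$), following \cite{GLR19}; the scaling identities of \Cref{L:basicproplambda} then give $\lambda_1(D)\leq\lambda_1(\mathbb{B})M^{-2}$ and $M\uparrow\rho_D$ concludes. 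You instead test the Rayleigh quotient directly with the multiplicative capacitary cut-off $(1-\psi_\epsilon)u$ of the transplanted eigenfunction and control the mass loss by the Sobolev embedding; this bypasses the smoothing lemmas and the boundary-regularity (cone condition) input entirely and is more self-contained, whereas the paper's route costs nothing extra since its machinery is already needed for (1)$\Rightarrow$(3). Two points you should tighten. First, the bound $\|\nabla\psi_\epsilon\|_{L^2}^2\leq 2\epsilon$ is not ``by definition'' of the capacity used here: the paper defines $\ncap$ through energies $I(\nu)$ of probability measures, so you need the standard proportionality between this Newtonian capacity and the Dirichlet (variational) capacity --- with the paper's kernel normalization the constant is $(n-2)$ times the area of the unit sphere, obtained for instance by smoothing and truncating the normalized equilibrium potential; since only $\epsilon\to0$ matters, the constant is harmless, but the fact should be quoted rather than asserted as definitional. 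Second, \eqref{D:rhoD} is phrased with open balls, so to get $\ncap\bigl(\overline{\mathbb{B}(x;R)}\cap D^c\bigr)<\epsilon$ choose $R'\in(R,\rho_D)$ and use monotonicity via $\overline{\mathbb{B}(x;R)}\cap D^c\subset\mathbb{B}(x;R')\cap D^c$; with that, the verification that the continuous, compactly supported $H^1$ function $v$ vanishing pointwise on $D^c$ lies in $H_0^1(D)$ is indeed routine (truncate at level $\delta$ and mollify).
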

  It is known, and easy to see by a scaling argument, 
  that \eqref{E:bestupperbound} holds when $\rho_D$ is replaced with the inradius of $D$. We describe in detail how this standard result 
  and \Cref{C:bestupperbound} relate for those domains for which the strict Newtonian capacity inradius is attained
  within a ball of finite radius in Subsection~\ref{SS:inradii}. 
  We also note that \Cref{C:bestupperbound} is true in case of $n=2$, with the logarithmic capacity inradius 
  in place of the strict Newtonian capacity inradius. 
  A proof of this is implicitly contained in Subsection 2.3 in \cite{GLR19}.

 The proof of  (3)$\Rightarrow$(2) is done by constructing the function $\phi$ by summing up  (modifications of) 
 the potentials associated to the equilibrium measures of  
 compact, nonpolar sets which are ``nicely'' 
 distributed in the complement of the open set in consideration and whose capacities have a uniform lower bound. This kind of construction 
 has been employed in our earlier work \cite{GLR19}. However, our construction does not improve upon the lower bounds for 
 $\lambda_1$ presented in \cite{MaSh05}.

  We originally proved (2)$\Rightarrow$(1) in two steps. First, we used the $L^2$-methods for the
  twisted $\dbar$-complex, with twist factor $\phi$, applied to the $L^2$-complex of the 
  maximal weak extension of the exterior derivative on $n$-forms to show that (2)$\Rightarrow$(4) holds. Then we used standard facts for 
  Hilbert space operators  to derive (4)$\Rightarrow$(1). 
  Sean Curry pointed out to me an alternative, direct proof of (2)$\Rightarrow$(1) by Lee \cite[Lemma 7.6]{Lee06} 
  (based on Cheng--Yau \cite[pg. 345]{Cheng-Yau75}), which yields a better lower bound for $\lambda_1$. 
  Both proofs are briefly discussed in \Cref{S:(2)=>(1)}.
 
  The equivalence of (1) and (4) of \Cref{T:MainTheorem} follows from two observations. 
  Firstly, \eqref{E:PD} may be interpreted as the strong minimal extension of the exterior derivative 
  acting on functions, to have closed range in the space of square-integrable $1$-forms. 
  Secondly, this closed range property gets transferred, via the 
  Hodge-$\star$-operator, to a closed range property of the 
  maximal weak extension of the exterior derivative, acting on $(n-1)$-forms, in the space of square-integrable $n$-forms. 

  Lastly, the equivalence of (1) and (5) can be proved similarly to the one of (1) and (4). We give a somewhat different proof, 
  using that both, the strong minimal extension and the Hilbert space adjoint of 
  the weak maximal extension of 
  the Cauchy--Riemann operator acting on $(m-1)$-forms, yield Dirichlet forms for the Laplace operator on smooth, 
  compactly supported forms at the appropriate form level. 

  The paper is structured as follows. We review basic definitions and facts in \Cref{S:prelims}. Moreover, we 
  introduce this new notion of the strict Newtonian capacity inradius and derive some of its fundamental properties. 
  The proofs of the equivalences (1)$\Leftrightarrow$(4) and
  (1)$\Leftrightarrow$(5) are presented in Sections \ref{S:(1)<=>(4)} and \ref{S:(1)<=>(5)}, respectively. The remaining implications, 
  (2)$\Rightarrow$(1),  (3)$\Rightarrow$(2), and (1)$\Rightarrow$(3), are proved in Sections \ref{S:(2)=>(1)}, \ref{S:(3)=>(2)},
   and \ref{S:(1)=>(3)}, respectively.


\section{Preliminaries}\label{S:prelims}


\subsection{The Poincar\'e inequality}
  The Poincar{\'e} inequality is said to hold on an open set $D\subset\mathbb{R}^n$ if there is a constant $C>0$ such that
  \begin{align*}
     \int_{D}|f|^2\;dV\leq C\sum_{j=1}^{n}\int_{D}\left|\textstyle\frac{\partial f}{\partial x_j}\right|^2\;dV    
  \end{align*}
  holds for all $f\in\mathcal{C}^{\infty}_c(D)$, i.e., smooth functions with compact support in $D$. 
  The best constant in the Poincar\'{e} inequality for an open set $D$ is traditionally denoted by 
  (the reciprocal of the square root of) $\lambda_1(D)$, i.e., 
  $$  \lambda_1(D):=\inf\left\{\frac{\|\nabla\varphi\|^2_{L^2(D)}}{\|\varphi\|^{2}_{L^2(D)}}
  \;|\;\varphi\in H^1_0(D)\setminus\{0\} \right\}, $$
  here $H_0^1(D)$ denotes the closure of $\mathcal{C}_c^\infty(D)$ with respect to the graph norm
  $$f\mapsto\left(\|f\|_{L^2(D)}^2+\|\nabla f\|_{L^2(D)}^2 \right)^{1/2}.$$
  This nomenclature arises as $\lambda_1(D)$ may be interpreted as the smallest eigenvalue of the Dirichlet--Laplacian. In particular, 
  if $\lambda_1(D)>0$, then it is attained at some $\psi\in H_0^1(D)\setminus\{0\}$, 
  and $\psi$ is a distributional solution to the boundary value problem
  \[ 
    \begin{cases} 
      \Delta\psi+\lambda\psi=0 & \text{ on }D \\
      \psi=0 & \text{ on }bD
    \end{cases}\;,
  \]
  where $\Delta:=\sum_{j=1}^n\frac{\partial^2}{\partial x_j^2}$.
  If $D$ has sufficiently regular boundary, $\psi$ is actually the solution to the strong boundary value problem, and hence, $\lambda_1(D)$ 
  is an actual eigenvalue for the
  Dirichlet--Laplacian, see, e.g., \cite[\S 6.3 $\&$ \S 6.5]{Evans98}. This eigenvalue is of great interest as it appears in various problems 
  in mathematical physics, e.g., it is the fundamental mode of vibration for a planar membrane of given shape with fixed boundary.

  We conclude this subsection by collecting some basic properties of $\lambda_1$. One of these uses the notion of a set $E$ being polar, i.e., $E$ is contained in $\{x\in\mathbb{R}^n: \psi(x)=-\infty\}$ for some non-constant subharmonic function $\psi$ on $\mathbb{R}^n$. 
  \begin{lemma}\label{L:basicproplambda}
    Let $D\subset\mathbb{R}^n$ be open.
    \begin{itemize}
      \item[(i)]  If $D'\subset D$ is open, then $\lambda_1(D)\leq\lambda_1(D')$.
      \item[(ii)] If $D'\subset D$ is open and $D\setminus D'$ is polar, then $\lambda_1(D)=\lambda_1(D')$.
      \item[(iii)] If  $x\in\mathbb{R}^n$, then $\lambda_1(D)=\lambda_1(D+x)$.
      \item[(iv)] If $r>0$, then $r^2\lambda_1(rD)=\lambda_1(D)$.
      \item[(v)] Suppose $\{D_j\}_{j\in\mathbb{N}}\subset D$ is a sequence of increasing, 
      open sets such that $D=\bigcup_{j\in\mathbb{N}}D_j$. If $\lambda:=\inf_{j\in\mathbb{N}}\{\lambda_1(D)\}$ is positive, 
      then $\lambda_1(D)$ is positive and equals $\lambda$.
    \end{itemize}
  \end{lemma}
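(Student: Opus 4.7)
The plan is to verify each of the five items largely by unwinding the variational definition of $\lambda_1$. Items (i), (iii), (iv), (v) are essentially formal; the only item requiring a nontrivial input from potential theory is (ii).

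For (i), any $f\in\mathcal{C}_c^\infty(D')$ extends by zero to a function $\widetilde{f}\in\mathcal{C}_c^\infty(D)$ with $\|\widetilde{f}\|_{L^2(D)}=\|f\|_{L^2(D')}$ and $\|\nabla\widetilde f\|_{L^2(D)}=\|\nabla f\|_{L^2(D')}$. Plugging $\widetilde{f}$ into the Rayleigh quotient for $D$ and taking the infimum over $f\in\mathcal{C}_c^\infty(D')$ gives $\lambda_1(D)\leq\lambda_1(D')$. Item (iii) is immediate from the change of variables $y=x-x_0$, which is an isometry of $L^2$ and commutes with $\nabla$. For (iv), substituting $g(y):=f(y/r)$ in the Rayleigh quotient and tracking the Jacobian shows $\|g\|_{L^2(rD)}^2=r^n\|f\|_{L^2(D)}^2$ and $\|\nabla g\|_{L^2(rD)}^2=r^{n-2}\|\nabla f\|_{L^2(D)}^2$, so the quotient on $rD$ equals $r^{-2}$ times the quotient on $D$; taking infima yields $r^2\lambda_1(rD)=\lambda_1(D)$.

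For (v), apply (i) to get $\lambda_1(D)\leq\lambda_1(D_j)$ for every $j$, hence $\lambda_1(D)\leq\lambda$. For the reverse, fix $f\in\mathcal{C}_c^\infty(D)$; since $\supp f$ is compact and $\{D_j\}$ is an increasing open cover of $D$, there is $j_0$ with $\supp f\subset D_{j_0}$, so $f\in\mathcal{C}_c^\infty(D_{j_0})$ and its Rayleigh quotient is bounded below by $\lambda_1(D_{j_0})\geq\lambda$. Taking the infimum over $f$ gives $\lambda_1(D)\geq\lambda$. (The positivity hypothesis is only needed to conclude that $\lambda_1(D)$ is strictly positive.)

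The main obstacle is (ii). One direction, $\lambda_1(D)\leq\lambda_1(D')$, is already given by (i). For the reverse, the difficulty is that a test function $f\in\mathcal{C}_c^\infty(D)$ need not vanish on the polar set $E:=D\setminus D'$, so it is not directly a test function for $D'$. The plan is to invoke the classical fact from potential theory that polar sets in $\mathbb{R}^n$, $n\geq 3$, have vanishing Newtonian capacity and are therefore $H^1$-removable: for every $f\in\mathcal{C}_c^\infty(D)$ and every $\varepsilon>0$ there exists a cutoff $\chi_\varepsilon\in\mathcal{C}_c^\infty(\mathbb{R}^n\setminus E)$ with $0\leq\chi_\varepsilon\leq 1$, $\chi_\varepsilon\equiv 1$off an $\varepsilon$-neighborhood of $E$, and $\|\nabla\chi_\varepsilon\|_{L^2}<\varepsilon$. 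Then $\chi_\varepsilon f\in\mathcal{C}_c^\infty(D')$ and $\chi_\varepsilon f\to f$ in $H^1$-norm as $\varepsilon\to 0$. Plugging $\chi_\varepsilon f$ into the Rayleigh quotient for $D'$, letting $\varepsilon\to 0$, and then taking the infimum over $f$ yields $\lambda_1(D')\leq\lambda_1(D)$. Together with (i) this gives equality.
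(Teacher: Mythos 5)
Your proof is correct and follows essentially the same route as the paper: (i), (iii), (iv), (v) are the same variational/change-of-variables and exhaustion arguments (using density of $\mathcal{C}^\infty_c(D)$ in $H^1_0(D)$), and for (ii) you sketch by hand, via capacity cutoffs, exactly the removability fact $H^1_0(D)=H^1_0(D')$ for polar $D\setminus D'$ that the paper simply cites (Fuglede). Only a cosmetic slip: $\chi_\varepsilon$ cannot be both compactly supported in $\mathbb{R}^n\setminus E$ and $\equiv 1$ off an $\varepsilon$-neighborhood of $E$; one should instead take $\chi_\varepsilon$ vanishing on a neighborhood of the compact polar set $E\cap\supp f$ and equal to $1$ outside a small-capacity neighborhood of it, which is what your argument actually uses.
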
  
  \begin{proof}
     Part (i) follows from the monotonicity property of $H_0^1(.)$, while change of variable arguments yield the homothety-translation 
     properties (iii) and (iv). 
     Part (ii) is based on the fact that  $H_0^1(D)=H_0^1(D’)$ whenever $D\setminus D’$ is polar, see, e.g., \cite[pg. 93, part (c)]{Fug99}.
     The continuity from below property, i.e., part (v), follows after making two observations. First,  (i) yields $\lambda\geq\lambda_1(D)$. 
     Second, 
     the density of $\mathcal{C}_c^\infty(D)$ in $H_0^1(D)$ and the fact that 
     $f\in\mathcal{C}^\infty_c(D)$ implies $f\in\mathcal{C}_c^{\infty}(D_j)$ for all sufficiently large $j\in\mathbb{N}$ yield
     $\lambda_1(D)\geq\lambda>0$.
  \end{proof}

 \subsection{Newtonian potentials and capacity}\label{SS:Newtonian}
 
  Let $K\subset\mathbb{R}^n$, $n\geq 3$,  be a compact set. Let $\mathcal{M}(K)$ be the set of Borel probability measures with 
  support in $K$. The function
  \begin{align}\label{E:potentialdefinition}
    p_\nu(x):=\int_{\mathbb{R}^n}|x-y|^{2-n}\;d\nu(y)
  \end{align}  
  is called the Newtonian potential associated to $\nu\in\mathcal{M}(K)$. The energy associated to such a measure $\nu$ is defined as
  $$I(\nu):=\int_{\mathbb{R}^n}\int_{\mathbb{R}^n}|x-y|^{2-n}\;d\nu(y)\;d\nu(x).$$
  It follows that $I(\nu)\in(0,\infty]$.
  If $I(\nu)=\infty$ for all $\nu\in\mathcal{M}(K)$, then the Newtonian capacity of $K$, $\ncap(K)$, is said to be zero. In this case, 
  $K$ is polar, which may be proved similarly to 
   \cite[Theorems 5.10 and 5.11]{HayKen76}. If $I(\nu)<\infty$ for some $\nu\in\mathcal{M}(K)$,
  then set 
  \begin{align*}
    \ncap(K):=\sup\left\{\textstyle\frac{1}{I(\nu)}\,|\,\nu\in\mathcal{M}(K) \right\}.
  \end{align*} 

  The notion of Newtonian capacity may be extended to Borel sets $E\subset\mathbb{R}^n$ by setting
  \begin{align}\label{E:innerreg}
    \ncap(E)=\sup\left\{\ncap(K)\,|\, K\subset E \text{ compact}\right\}.
  \end{align}  
  Moreover, outer regularity holds for the Newtonian capacity on Borel sets, i.e., for any Borel set $E\subset\mathbb{R}^n$ 
  the capacity function
  satisfies
  \begin{align}\label{E:outerreg}
    \ncap(E)=\inf\left\{\ncap(U)\,|\, E\subset U,\;\;U\subset\mathbb{R}^n\;\;\text{open}\right\},
  \end{align}
  see \cite[Theorem 2.8, Chapter II, Section 2.10]{Land72} for the Newtonian capacity satisfying inner regularity \eqref{E:innerreg} 
  and outer regularity \eqref{E:outerreg} on Borel sets.
  Some of the elemental properties of the Newtonian capacity are summarized in the following Lemma.
  \begin{lemma}\label{L:basicpropcap}
    Let $E\subset\mathbb{R}^n$ be a Borel set, $x\in\mathbb{R}^n$, and $r>0$. Then the following hold.
    \begin{itemize}
       \item[(i)]  $\ncap(E+x)=\ncap(E)$.
      \item[(ii)]  $\ncap(rE)=r^{n-2}\ncap(E)$.
      \item[(iii)] If $E'\subset E$, then $\ncap(E')\leq\ncap(E)$.
      \item[(iv)] If $\{E_i\}_{i\in\mathbb{N}}$ and $E:=\bigcup_{i=1}^\infty E_i$, then
      $$\ncap(E)\leq\sum_{i=1}^\infty\ncap(E_i).$$ 
    \end{itemize}
  \end{lemma}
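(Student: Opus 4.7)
The plan is to establish each item first for compact sets by direct manipulation of the energy functional, and then to transfer it to arbitrary Borel sets using inner regularity \eqref{E:innerreg} and, for (iv), outer regularity \eqref{E:outerreg}. The only item that requires real work is (iv); items (i)--(iii) reduce to routine change-of-variables observations.

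For (i), observe that for compact $K$ the assignment $\nu \mapsto \tau_x^\ast \nu$, defined by $(\tau_x^\ast \nu)(A) = \nu(A - x)$, is a bijection of $\mathcal{M}(K)$ onto $\mathcal{M}(K + x)$ under which the energy $I$ is invariant, since $|y - z|^{2-n}$ depends only on $y - z$. Hence $\ncap(K + x) = \ncap(K)$, and the identity extends to any Borel $E$ via \eqref{E:innerreg}, because the compact subsets of $E + x$ are precisely the translates of those of $E$. For (ii), the scaling $\nu \mapsto \sigma_r^\ast \nu$ with $(\sigma_r^\ast \nu)(A) = \nu(A/r)$ bijects $\mathcal{M}(K)$ onto $\mathcal{M}(rK)$, and a change of variables gives $I(\sigma_r^\ast \nu) = r^{2-n} I(\nu)$, so $\ncap(rK) = r^{n-2}\ncap(K)$ on compact sets and then on Borel sets as before. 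For (iii), every compact subset of $E'$ is a compact subset of $E$, so \eqref{E:innerreg} immediately gives $\ncap(E') \leq \ncap(E)$.

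For (iv) I would proceed in three stages. Stage~1 (finite subadditivity for compact sets), $\ncap(K_1 \cup K_2) \leq \ncap(K_1) + \ncap(K_2)$, is the main obstacle. The cleanest route is to pass to the admissible-measure characterization $\ncap(K) = \sup\{\mu(K) : \mu \geq 0,\, p_\mu \leq 1\}$, which is equivalent to the energy-reciprocal definition used above (see \cite[Ch.~II]{Land72}); with this characterization, splitting an admissible $\mu$ on $K_1 \cup K_2$ into its restrictions to $K_1$ and to $K_2 \setminus K_1$ yields admissible measures on the individual sets and gives subadditivity. In Stage~2, given open sets $U_i$ with union $U$, any compact $K \subset U$ lies in a finite subunion and thus admits a decomposition $K = K_1 \cup \cdots \cup K_N$ with $K_j \subset U_{i_j}$ compact, by a standard shrinking argument on the finite subcover; iterating Stage~1, using monotonicity, and then invoking \eqref{E:innerreg} yields $\ncap(U) \leq \sum_i \ncap(U_i)$. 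Finally, in Stage~3, given Borel sets $E_i$ with union $E$ and $\epsilon > 0$, use \eqref{E:outerreg} to pick open $U_i \supset E_i$ with $\ncap(U_i) \leq \ncap(E_i) + \epsilon\, 2^{-i}$; monotonicity and Stage~2 give $\ncap(E) \leq \ncap(\bigcup_i U_i) \leq \sum_i \ncap(E_i) + \epsilon$, and letting $\epsilon \downarrow 0$ completes the proof.
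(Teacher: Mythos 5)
Your proposal is correct, and for items (i)--(iii) it is essentially the paper's argument: handle compact sets by the translation/dilation action on measures and the resulting change of variables in the energy, then pass to Borel sets via inner regularity \eqref{E:innerreg}. Where you genuinely diverge is (iv). The paper simply cites \cite[Chapter II, Section 1.5]{Land72} for subadditivity on compact sets and then asserts that the Borel case follows from inner regularity; you instead give a self-contained three-stage argument: finite subadditivity for compacta via the equivalent characterization $\ncap(K)=\sup\{\mu(K)\,:\,\mu\geq 0,\ p_\mu\leq 1\}$ and splitting an admissible measure into its restrictions to $K_1$ and $K_2\setminus K_1$ (the restrictions stay admissible because the kernel is positive, so this works); then countable subadditivity for open sets via a finite subcover and the standard shrinking of a compact $K$ into compacta $K_j\subset U_{i_j}$; then the Borel case via outer regularity \eqref{E:outerreg} with an $\epsilon\,2^{-i}$ budget. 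Your route costs a little more writing and still leans on \cite{Land72} for the equivalence of the mass and energy descriptions of capacity, but it buys transparency on exactly the point where the paper is terse: passing countable subadditivity from compact to Borel sets by inner regularity alone is not immediate, since $K\cap E_i$ need not be compact, and your Stages 2--3 (open covers plus outer regularity) are the standard and cleaner way to close that step.
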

  \begin{proof}
   First consider (i)-(iv) for compact sets. In that case, (i) and (ii) 
   follow from the invariance of Borel measures under translations and its behavior under 
   dilations, respectively, while proofs of (iii) and (iv) may be found in \cite[Chapter II, Section 1.5]{Land72}. 
   The general case for Borel sets may then be derived using inner regularity \eqref{E:innerreg}.
  \end{proof}

  In the case that $\ncap(K)>0$ for a compact set $K$, it can be shown that there exists a unique $\mu_K\in\mathcal{M}(K)$ such that
  \begin{align}\label{D:energy}
    I(\mu_K)=\inf\{I(\nu)\,|\,\nu\in\mathcal{M}(K) \}
  \end{align}
  see, e.g., \cite[Chapter II, Section 1.3]{Land72}\footnote{Our definition of a potential function for a given Borel probability 
  measure differs from Landkof’s by a multiplicative constant.}. 
  This measure $\mu_K$ is called the equilibrium measure associated to $K$. The potential function associated 
  to the equilibrium measure satisfies the following properties.
  \begin{lemma}\label{L:basicproppot}
    Let $K\subset\mathbb{R}^n$ be a compact set with $\ncap(K)>0$. Let $\mu$ be the equilibrium measure of $K$, and $p_{\mu}$ the   
    potential associated to 
    $\mu$. Then the following hold.
    \begin{itemize} 
      \item[(i)] $p_{\mu}$ is harmonic on $\mathbb{R}^n\setminus K$ and superharmonic on $\mathbb{R}^n$,
      \item[(ii)] $p_{\mu}\in\mathcal{C}^\infty(\mathbb{R}^n\setminus K)$,
      \item[(iii)] $p_{\mu}=I(\mu)$ holds on $K$ outside a polar set,
      \item[(iv)] $p_{\mu}(x)\leq I(\mu)$ for all $x\in\mathbb{R}^n$.
    \end{itemize}
  \end{lemma}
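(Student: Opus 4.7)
The plan is to dispatch (i) and (ii) by differentiating the Newtonian kernel under the integral sign, and to obtain (iii) and (iv) together from the variational characterization of $\mu$ as the minimizer of $I$, followed by a maximum-principle argument on $\mathbb{R}^n\setminus\supp\mu$. For (i) and (ii), since $K$ is compact and $\supp\mu\subset K$, one has $\dist(x_0,K)>0$ for any $x_0\in\mathbb{R}^n\setminus K$, so $x\mapsto|x-y|^{2-n}$ is smooth in $x$ uniformly for $y\in K$ near $x_0$. Differentiating under the integral and using $\Delta_x|x-y|^{2-n}=0$ for $x\neq y$ yields $p_\mu\in\mathcal{C}^\infty(\mathbb{R}^n\setminus K)$ and harmonicity there. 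For superharmonicity on all of $\mathbb{R}^n$: the kernel is lower semicontinuous, so Fatou's lemma gives l.s.c.\ of $p_\mu$; and each slice $x\mapsto|x-y|^{2-n}$ is superharmonic on $\mathbb{R}^n$, so Fubini converts this into the sub-mean value inequality for $p_\mu$ at every point.

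For (iii) and (iv), the engine is the variational inequality
\begin{align*}
  \int p_\mu\,d\sigma\geq I(\mu)
\end{align*}
valid for every probability measure $\sigma$ on $K$ with $I(\sigma)<\infty$. This follows from the fact that $(1-t)\mu+t\sigma\in\mathcal{M}(K)$ for $t\in[0,1]$, so $I((1-t)\mu+t\sigma)\geq I(\mu)$; expanding this quadratic in $t$ and differentiating at $t=0^+$ yields the claim. If the Borel set $E:=\{x\in K:p_\mu(x)<I(\mu)\}$ were nonpolar, inner regularity of $\ncap$ would furnish a compact $F\subset E$ with $\ncap(F)>0$; the normalized equilibrium measure of $F$ has finite energy, hence assigns zero mass to the polar set $\{p_\mu=+\infty\}$, so integrating $p_\mu$ against it would produce a value strictly less than $I(\mu)$, contradicting the inequality above. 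Hence $E$ is polar, i.e.\ $p_\mu\geq I(\mu)$ quasi-everywhere on $K$. Combined with $\int p_\mu\,d\mu=I(\mu)$ and the fact that $\mu$ itself, having finite energy, assigns zero mass to polar sets, this forces $p_\mu=I(\mu)$ $\mu$-almost everywhere.

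To conclude (iv), use lower semicontinuity again: $\{p_\mu>I(\mu)\}$ is open, so if it intersected $\supp\mu$ it would carry positive $\mu$-mass, contradicting $p_\mu=I(\mu)$ $\mu$-a.e. Hence $p_\mu\leq I(\mu)$ pointwise on $\supp\mu$. On $\mathbb{R}^n\setminus\supp\mu$, $p_\mu$ is nonnegative, harmonic (by the same argument as in (i), with $\supp\mu$ in place of $K$), and tends to $0$ at infinity, so the classical maximum principle applied on each connected component delivers $p_\mu\leq I(\mu)$ everywhere, yielding (iv). Combining this with the quasi-everywhere lower bound on $K$ then gives (iii). The most delicate step is the upgrade from the ``soft'' quasi-everywhere inequality on $K$ to a pointwise bound on $\supp\mu$ that makes the harmonic maximum principle applicable; the lower-semicontinuity argument applied to the open set $\{p_\mu>I(\mu)\}$ is the small but essential device that bridges this gap.
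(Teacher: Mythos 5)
Your items (i)--(ii), the Frostman variational inequality $\int p_\mu\,d\sigma\geq I(\mu)$, the polarity of $\{x\in K: p_\mu(x)<I(\mu)\}$, and the deduction via lower semicontinuity that $p_\mu\leq I(\mu)$ pointwise on $\supp\mu$ are all correct (the paper itself only cites Landkof for this lemma, so a self-contained argument like yours is welcome; note only that you mean the \emph{super}-mean value inequality for $p_\mu$, and that ``nonpolar Borel set contains a compact set of positive capacity'' silently uses the identification of polar Borel sets with sets of zero capacity). The genuine gap is the final step of (iv): you cannot conclude by ``the classical maximum principle'' on $\Omega:=\mathbb{R}^n\setminus\supp\mu$ from the mere pointwise bound $p_\mu\leq I(\mu)$ on $\supp\mu$. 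The classical maximum principle requires $\limsup_{\Omega\ni x\to\zeta}p_\mu(x)\leq I(\mu)$ at boundary points $\zeta\in\partial\Omega$, and since $p_\mu$ is only lower semicontinuous, an upper bound on its boundary \emph{values} gives no control of this $\limsup$; a superharmonic potential can be finite at a point while having much larger $\limsup$ nearby. Passing from ``$\leq M$ on $\supp\mu$'' to ``$\leq M$ on $\mathbb{R}^n$'' is exactly the Frostman--Maria maximum principle for potentials (Landkof's first maximum principle), i.e.\ the nontrivial theorem on which the cited part (c$'$) of Landkof rests; invoking the classical maximum principle here assumes precisely what must be proved. (Incidentally, the step you flag as most delicate --- upgrading the quasi-everywhere bound to a pointwise one on $\supp\mu$ --- is the routine part; the extension off the support is the crux.)

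The argument can be repaired within your framework, but it needs two additional inputs. First, from your quasi-everywhere identity one gets $p_\mu=I(\mu)$ on $\supp\mu$ outside a polar set $P$, and lower semicontinuity together with $p_\mu\leq I(\mu)$ on $\supp\mu$ shows that the restriction of $p_\mu$ to $\supp\mu$ is continuous at every point of $\supp\mu\setminus P$; the Evans--Vasilesco continuity principle then upgrades this to continuity of $p_\mu$ on $\mathbb{R}^n$ at those points, so $\limsup_{\Omega\ni x\to\zeta}p_\mu(x)=I(\mu)$ for all $\zeta\in\partial\Omega\setminus P$. Second, $p_\mu$ is bounded on $\Omega$: if $\zeta$ is a nearest point of $\supp\mu$ to $x\in\Omega$, then $|y-\zeta|\leq 2|x-y|$ for all $y\in\supp\mu$, whence $p_\mu(x)\leq 2^{n-2}p_\mu(\zeta)\leq 2^{n-2}I(\mu)$. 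With boundedness, decay at infinity, and boundary control off a polar set, the extended maximum principle for bounded harmonic functions (which tolerates a polar exceptional boundary set) gives $p_\mu\leq I(\mu)$ on $\Omega$, and (iii), (iv) follow as you state. Alternatively, simply cite the maximum principle for Newtonian potentials directly, as the paper in effect does by referring to Landkof.
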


  \begin{proof}
    For a proof of (i), see \cite[Theorem 1.4 in Chapter I, Section 3.8]{Land72}; (ii) follows from (i). For proofs of (iii) and (iv), see 
    \cite[part (c') in Chapter II, Section 1.3]{Land72}.
  \end{proof}

  \subsection{The strict Newtonian capacity inradius -- definition and properties}\label{SS:inradii}

  In the spirit of Souplet \cite[Section 2]{Sou99}, we introduce the following inradius for open sets in $\mathbb{R}^n$.
  \begin{definition}\label{D:sNci}
    Let $D\subset\mathbb{R}^n$, $n\geq 3$, be an open set.
    The strict Newtonian capacity inradius of $D$ is given by
    \begin{align*}
      \rho_{D}=\sup\{R>0\,|\,\forall \epsilon>0\;\exists x\in D:\ncap(\mathbb{B}(x;R)\cap D^c)<\epsilon\}.
    \end{align*}
  \end{definition}
  Here, $\mathbb{B}(x;R)$ denotes the open ball of radius $R>0$ with center at $x\in\mathbb{R}^n$. 
  We abbreviate $\mathbb{B}(0;1)$ by $\mathbb{B}$.
  
  We work with an equivalent formulation for the strict Newtonian capacity inradius as described in the following lemma.
  \begin{lemma}\label{L:2nddefinradius}
    Let $D\subset\mathbb{R}^n$, $n\geq 3$. Then
    \begin{align}\label{E:2nddefinradius}
      \rho_{D}=\sup\{R>0\,|\,\forall \epsilon>0\;\exists x\in \mathbb{R}^n:\ncap(\mathbb{B}(x,R)\cap D^c)<\epsilon.\}
    \end{align}
  \end{lemma}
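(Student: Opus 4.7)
The plan is to denote the right-hand side of \eqref{E:2nddefinradius} by $\rho_D^\ast$ and prove the two inequalities separately. The easy direction is $\rho_D\leq \rho_D^\ast$: if $R$ is admissible in \Cref{D:sNci}, then the witness $x\in D$ one obtains for each $\epsilon>0$ is in particular a point in $\mathbb{R}^n$, so $R$ is admissible in the right-hand side of \eqref{E:2nddefinradius}. The supremum over the larger family of admissible radii is therefore at least $\rho_D$.

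For the reverse inequality $\rho_D^\ast\leq\rho_D$, I would fix $0<R<R'<\rho_D^\ast$ and show that $R$ is admissible in \Cref{D:sNci}, which upon letting $R'\uparrow\rho_D^\ast$ and $R\uparrow R'$ yields $\rho_D\geq\rho_D^\ast$. The key quantitative input is that by \Cref{L:basicpropcap}(ii) and the fact that $\ncap(\mathbb{B})>0$, one has $\ncap(\mathbb{B}(x;R'-R))=c_n(R'-R)^{n-2}$ for a positive constant $c_n$ independent of $x$.

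Given $\epsilon>0$, I would set $\epsilon':=\min\{\epsilon,\tfrac{1}{2}c_n(R'-R)^{n-2}\}$ and apply the hypothesis $R'<\rho_D^\ast$ to produce $x\in\mathbb{R}^n$ with $\ncap(\mathbb{B}(x;R')\cap D^c)<\epsilon'$. The choice of $\epsilon'$ forces $D\cap\mathbb{B}(x;R'-R)\neq\emptyset$: if this intersection were empty, then $\mathbb{B}(x;R'-R)\subset D^c$, and by monotonicity (\Cref{L:basicpropcap}(iii)) we would obtain
\begin{align*}
  c_n(R'-R)^{n-2}=\ncap(\mathbb{B}(x;R'-R))\leq\ncap(\mathbb{B}(x;R')\cap D^c)<\tfrac{1}{2}c_n(R'-R)^{n-2},
\end{align*}
a contradiction. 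Thus I may pick $y\in D\cap\mathbb{B}(x;R'-R)$, and since $\mathbb{B}(y;R)\subset\mathbb{B}(x;R')$, monotonicity gives $\ncap(\mathbb{B}(y;R)\cap D^c)\leq\ncap(\mathbb{B}(x;R')\cap D^c)<\epsilon'\leq\epsilon$, establishing that $R$ is admissible in the definition of $\rho_D$.

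The whole argument is really just a small-radius trick: trading a bit of radius $R'-R$ for the ability to move the ball's center from $\mathbb{R}^n$ into $D$. I do not expect a serious obstacle—the only point requiring care is extracting an explicit, $x$-independent lower bound for $\ncap(\mathbb{B}(x;R'-R))$, which the scaling and translation invariance from \Cref{L:basicpropcap}(i)--(ii) supply immediately.
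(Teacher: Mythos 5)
Your argument is correct and is essentially the paper's own proof: the same trick of using the lower bound $\ncap(\mathbb{B}(x;r))=c_n r^{n-2}$ to force a point of $D$ near the center of a nearly-capacity-free ball, and then monotonicity to recenter a slightly smaller ball at that point. The only difference is organizational—you run it directly with a fixed radius loss $R'-R$ and cap $\epsilon$ accordingly, whereas the paper argues by contradiction with an $\epsilon$-dependent radius loss $(\epsilon/c)^{1/(n-2)}$—which is, if anything, a slightly cleaner presentation of the same idea.
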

  \begin{proof}
    To prove \eqref{E:2nddefinradius}, denote its right hand side by $\mathfrak{R}$. It is immediate that $\rho_{D}\leq\mathfrak{R}$. 
    Now suppose that $\rho_{D}<\mathfrak{R}$. Choose 
    $R_1\in(\rho_{D},\mathfrak{R})$.  It then follows from the definitions of $\rho_D$ and $\mathfrak{R}$ that
    \begin{align*}
      &\exists\delta_0>0\;\forall x\in D: \ncap\left(\mathbb{B}(x;R_1)\cap D^c\right)\geq \delta_0,
      \;\;\text{and}\\
      &\forall\epsilon>0\;\exists x\in\mathbb{R}^n:\ncap\left(\mathbb{B}(x;R_1)\cap D^c \right)<\epsilon.
    \end{align*}
    This implies that
    \begin{align}\label{E:tempcapest}
       \forall\epsilon\in(0,\delta_0)\;\;
       \exists x\in\mathbb{R}^n\setminus D: \ncap(\mathbb{B}(x;R_1)\cap D^c)<\epsilon.
     \end{align}
     In the next step, we use the fact that $\ncap(\mathbb{B}(x,r))=cr^{n-2}$ for some $c>0$.
     Then, for any such pair $(\epsilon,x)$ in \eqref{E:tempcapest} with 
     $$(\epsilon/c)^{\frac{1}{n-2}}\leq R_1,$$
     there exists a $y\in D\cap\mathbb{B}(x;(\epsilon/c)^{\frac{1}{n-2}})$. Otherwise,  
    $\mathbb{B}(x;(\epsilon/c)^{\frac{1}{n-2}})$ would be contained in $D^c$, so that
    $$\ncap\bigl(\mathbb{B}(x;R_1)\cap D^c\bigr)\geq\ncap\bigr(\mathbb{B}(x;(\epsilon/c)^{\frac{1}{n-2}})\cap D^c\bigr)
    =\ncap\bigr(\mathbb{B}(x;(\epsilon/c)^{\frac{1}{n-2}})\bigr)
    =\epsilon.$$
    This is a  contradiction to \eqref{E:tempcapest}.
    It now follows from the choice of $y$ and \eqref{E:tempcapest} that
    \begin{align*}
      \ncap(\mathbb{B}(y;R_1-(\epsilon/c)^{\frac{1}{n-2}})\cap D^c)<\epsilon.
    \end{align*}
    Now, choose an $R_0\in(\rho_D,R_1)$. Then, after choosing $\epsilon_0>0$ such 
    that $$R_0\leq R_1-(\epsilon_0/c)^{\frac{1}{n-2}},$$ we obtain that for each $\epsilon\in(0,\epsilon_0)$ there exists a $y\in D$ such that
    $$\ncap(\mathbb{B}(y,R_0)\cap D^c)<\epsilon,$$
    and, hence, $\rho_{D}\geq R_0$. This is a contradiction to the assumption that $\rho_D<R_0$. As $R_0$ was an arbitrary value in 
    $(\rho_{D},\mathfrak{R})$, it follows that $\rho_{D}=\mathfrak{R}$.
  \end{proof}

  To understand the strict Newtonian capacity inradius for bounded domains we introduce the following notion of inradius.

  \begin{definition}\label{D:Nci}
    Let $D\subset\mathbb{R}^n$, $n\geq 3$, be an open set.
    The Newtonian capacity inradius of $D$ is given by
    \begin{align*}
        \mathfrak{r}_D=\sup\{R>0\,|\,\exists x\in \mathbb{R}^n:\ncap(\mathbb{B}(x;R)\cap D^c)=0\}.
    \end{align*}
  \end{definition}

  It follows from Definitions \ref{D:sNci} and \ref{D:Nci} 
  that $\rho_D\geq\mathfrak{r}_D$. For unbounded, open sets, the inequality may be strict. 
  For instance,
  the two inradii for $$D:=\mathbb{R}^n\setminus\bigcup_{m\in\mathbb{Z}^n\setminus\{0\}}\mathbb{B}(m,|m|^{-1})$$
  are $\rho_{D}=\infty$ and $\mathfrak{r}_D=\sqrt{n}/2$. However, for bounded, open sets, these two notions agree.

  \begin{lemma}\label{L:pol=cap}
    Let $D\subset\mathbb{R}^n$, $n\geq 3$,  be an open and bounded set. Then $\rho_D=\mathfrak{r}_{D}$.
  \end{lemma}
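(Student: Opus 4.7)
The direction $\rho_D\geq\mathfrak{r}_D$ is immediate from Definitions \ref{D:sNci} and \ref{D:Nci}, so the work is to show $\rho_D\leq\mathfrak{r}_D$ under the boundedness assumption. The plan is to fix $R<\rho_D$ and $R'<R$ and produce a single point $x_0\in\mathbb{R}^n$ at which $\ncap(\mathbb{B}(x_0;R')\cap D^c)=0$; letting $R'\uparrow R\uparrow\rho_D$ then yields $\rho_D\leq\mathfrak{r}_D$.

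By \Cref{L:2nddefinradius}, for $R<\rho_D$ and each $\epsilon>0$ there is some $x_\epsilon\in\mathbb{R}^n$ with $\ncap(\mathbb{B}(x_\epsilon;R)\cap D^c)<\epsilon$. Here is where boundedness enters: choose $M>0$ with $D\subset\mathbb{B}(0;M)$, so whenever $|x_\epsilon|>M+R$ we have $\mathbb{B}(x_\epsilon;R)\subset D^c$, giving $\ncap(\mathbb{B}(x_\epsilon;R)\cap D^c)=\ncap(\mathbb{B}(x_\epsilon;R))=cR^{n-2}$ (using the scaling and translation invariance from \Cref{L:basicpropcap}). Restricting to $\epsilon<cR^{n-2}$ forces $x_\epsilon\in\overline{\mathbb{B}(0;M+R)}$, a compact set.

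Applying Bolzano--Weierstrass to a sequence $\epsilon_k\downarrow 0$, I extract a convergent subsequence $x_{\epsilon_k}\to x_0$. Fix $R'\in(0,R)$. For all sufficiently large $k$ one has $|x_{\epsilon_k}-x_0|<R-R'$ and therefore $\mathbb{B}(x_0;R')\subset\mathbb{B}(x_{\epsilon_k};R)$; by the monotonicity in \Cref{L:basicpropcap}(iii),
\begin{align*}
  \ncap\bigl(\mathbb{B}(x_0;R')\cap D^c\bigr)\leq\ncap\bigl(\mathbb{B}(x_{\epsilon_k};R)\cap D^c\bigr)<\epsilon_k.
\end{align*}
Letting $k\to\infty$ gives $\ncap(\mathbb{B}(x_0;R')\cap D^c)=0$, hence $R'\leq\mathfrak{r}_D$. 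Since $R'<R<\rho_D$ were arbitrary, $\rho_D\leq\mathfrak{r}_D$.

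The only subtlety is the first step, namely pinning down the points $x_\epsilon$ in a compact region before extracting a limit; this is precisely where the boundedness of $D$ (as opposed to merely being bounded in one direction, where $\mathfrak{r}_D$ and $\rho_D$ can indeed differ, as the example preceding the lemma shows) is used. Everything else is a straightforward inclusion-plus-monotonicity argument combined with the approximation $R'<R$ that converts the quantitative smallness of capacity into a set of capacity zero.
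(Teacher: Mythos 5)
Your proof is correct and follows essentially the same route as the paper: confine the centers $x_\epsilon$ to a compact set using the boundedness of $D$, extract a convergent subsequence, shrink the radius from $R$ to $R'$ so that $\mathbb{B}(x_0;R')\subset\mathbb{B}(x_{\epsilon_k};R)$, and conclude by monotonicity that the capacity is zero. The only differences are cosmetic: you argue directly rather than by contradiction, and you spell out (via the estimate $\ncap(\mathbb{B}(x;R))=cR^{n-2}$) why the centers stay bounded, a step the paper asserts without detail.
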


  \begin{proof}
    Suppose $\mathfrak{r}_D<\rho_D$. Let $R_1\in(\mathfrak{r}_D,\rho_D)$. Then
    \begin{align*}
      \ncap\left(\mathbb{B}(x,R_1)\cap D^c \right)>0\;\;\sjump\;\;\forall\;x\in \mathbb{R}^n,
    \end{align*}
    and for all $\epsilon>0$ there exists an $x\in \mathbb{R}^n$ such that
    \begin{align*}
      \ncap\left(\mathbb{B}(x,R_1)\cap D^c \right)<\epsilon. 
    \end{align*}
    The latter implies that there exists a sequence $\{x_j\}_{j\in\mathbb{N}}\subset \mathbb{R}^n$ such that
    $$\ncap\left(\mathbb{B}(x_j,R_1)\cap D^c\right)<\frac{1}{j}.$$
    Since $D$ is bounded, it follows that $\{x_j\}_{j\in\mathbb{N}}$ is a bounded set and, hence, has a convergent subsequence 
    $\{x_{j_k}\}_{k\in\mathbb{N}}$. Let $\hat{x}$ be the limit of $\{x_{j_k}\}_{k\in\mathbb{N}}$. 
    After choosing 
   $R_0\in(\mathfrak{r}_D,R_1)$ and $\delta\in(0, R_1-R_0)$, we may choose a $k_0\in\mathbb{N}$ such that
   \begin{align*}
     |\hat{x}-x_{j_k}|<\delta\;\;\forall\;\;k\geq k_0. 
   \end{align*}
   This implies that $\mathbb{B}(\hat{x}, R_0)\subset\mathbb{B}(x_{j_k},R_1)$ for all $k\geq k_0$, and hence
   \begin{align*}
     \ncap\left(\mathbb{B}(\hat{x},R_0)\cap D^c\right)\leq\ncap\left(\mathbb{B}(x_{j_k},R_1)\cap D^c \right)\leq \frac{1}{j_k} 
     \;\;\sjump\forall\;\;k\in\mathbb{N}. 
   \end{align*}
   Taking the limit as $k\to\infty$ yields
   $$\ncap \left(\mathbb{B}(\hat{x},R_0)\cap D^c\right)=0$$
   which is a contradiction to $R_0>\mathfrak{r}_D$. 
 \end{proof}
 
  It is now easy to see that $\rho_D$ and $\mathfrak{r}_D$ are equal whenever $D\subset\mathbb{R}^n$ is such that
  $\rho_D$ is attained within a ball of finite radius. For sake of brevity, we write $D_R$ for $D\cap\mathbb{B}(0;R)$ in the following.
  
  \begin{corollary}
    Let $D\subset\mathbb{R}^n$ be an open set such that $\rho_D=\rho_{D_R}$ for some $R>0$. Then
    $\rho_D=\mathfrak{r}_D$.
  \end{corollary}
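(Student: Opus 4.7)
The plan is to squeeze $\mathfrak{r}_D$ between $\mathfrak{r}_{D_R}$ and $\rho_D$, then close the loop via the hypothesis and \Cref{L:pol=cap}. The four ingredients I will use are:
\begin{itemize}
  \item[(a)] $D_R$ is bounded, so by \Cref{L:pol=cap}, $\rho_{D_R}=\mathfrak{r}_{D_R}$;
  \item[(b)] the hypothesis $\rho_D=\rho_{D_R}$;
  \item[(c)] the general inequality $\mathfrak{r}_D\leq\rho_D$ (already recorded after \Cref{D:Nci});
  \item[(d)] a monotonicity observation: $\mathfrak{r}_{D_R}\leq\mathfrak{r}_D$.
\end{itemize}

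Step (d) is the only item that needs any argument, and it is essentially a one-line use of monotonicity of the Newtonian capacity. Since $D_R\subset D$, we have $D^c\subset (D_R)^c$, so for every $x\in\mathbb{R}^n$ and every $r>0$,
\begin{align*}
  \mathbb{B}(x;r)\cap D^c\;\subset\;\mathbb{B}(x;r)\cap (D_R)^c,
\end{align*}
and \Cref{L:basicpropcap}(iii) therefore yields
$\ncap(\mathbb{B}(x;r)\cap D^c)\leq\ncap(\mathbb{B}(x;r)\cap (D_R)^c)$.
Consequently, any radius $r$ which is admissible in \Cref{D:Nci} for $D_R$ is also admissible for $D$, giving (d).

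Chaining these facts produces
\begin{align*}
  \mathfrak{r}_{D_R}\;\leq\;\mathfrak{r}_D\;\leq\;\rho_D\;=\;\rho_{D_R}\;=\;\mathfrak{r}_{D_R},
\end{align*}
so all four quantities coincide; in particular $\mathfrak{r}_D=\rho_D$, which is the claim.

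There is really no main obstacle here — the only conceptual point worth flagging is the direction of the monotonicity in (d): enlarging $D$ from $D_R$ to $D$ shrinks $D^c$, hence shrinks the capacity of $\mathbb{B}(x;r)\cap D^c$, so the Newtonian capacity inradius can only grow. This is exactly what is needed to pinch $\mathfrak{r}_D$ between $\mathfrak{r}_{D_R}$ and $\rho_{D_R}$ once the hypothesis $\rho_D=\rho_{D_R}$ is invoked.
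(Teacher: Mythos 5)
Your proof is correct and follows essentially the same route as the paper: both arguments combine $\rho_{D_R}=\mathfrak{r}_{D_R}$ from \Cref{L:pol=cap}, the monotonicity $\mathfrak{r}_{D_R}\leq\mathfrak{r}_D$, and $\mathfrak{r}_D\leq\rho_D$, then invoke the hypothesis $\rho_D=\rho_{D_R}$ to pinch everything together. Your explicit verification of the monotonicity step via \Cref{L:basicpropcap}(iii) is a correct expansion of what the paper states in one line.
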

  
  \begin{proof}
    It follows from \Cref{L:pol=cap} that $\rho_D=\mathfrak{r}_{D_R}$. Monotonicity of the 
    Newtonian inradius yields $\mathfrak{r}_{D_R}\leq\mathfrak{r}_D$, while $\mathfrak{r}_D\leq\rho_D$ holds by definition. 
    Hence, the  claim follows.
  \end{proof}
  
  Another observation on the strict Newtonian inradius is its invariance under polar sets.
  \begin{lemma}\label{L:rhopolarinv}
    Let $D\subset D’\subset\mathbb{R}^n$, $n\geq 3$, be open sets such that $D’\setminus D$ is polar. Then $\rho_{D}=\rho_{D’}$.
  \end{lemma}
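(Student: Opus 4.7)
The plan is to exploit the reformulation of the strict Newtonian capacity inradius given in \Cref{L:2nddefinradius}, where the center of the ball ranges over all of $\mathbb{R}^n$ rather than over the open set itself. Once we work with that formulation, the only thing to verify is that the quantity
$$\ncap(\mathbb{B}(x;R)\cap D^c)$$
is unchanged when $D$ is replaced by $D'$, for every $x\in\mathbb{R}^n$ and $R>0$.

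Since $D\subset D'$, we have the inclusion $(D')^c\subset D^c$, and the set difference satisfies $D^c\setminus(D')^c=D'\setminus D$, which is polar by hypothesis. Any subset of a polar set is polar (the same defining subharmonic function works), so $\mathbb{B}(x;R)\cap(D'\setminus D)$ is polar for every $x$ and $R$. By the discussion of $\ncap$ for Borel sets in \Cref{SS:Newtonian}, together with inner regularity \eqref{E:innerreg} and the fact that compact polar sets have zero Newtonian capacity, this yields
$$\ncap\bigl(\mathbb{B}(x;R)\cap(D'\setminus D)\bigr)=0.$$

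Combining monotonicity (\Cref{L:basicpropcap}(iii)) with the subadditivity property (\Cref{L:basicpropcap}(iv)) applied to the decomposition $\mathbb{B}(x;R)\cap D^c=\bigl(\mathbb{B}(x;R)\cap(D')^c\bigr)\cup\bigl(\mathbb{B}(x;R)\cap(D'\setminus D)\bigr)$ gives the sandwich
$$\ncap\bigl(\mathbb{B}(x;R)\cap(D')^c\bigr)\leq\ncap\bigl(\mathbb{B}(x;R)\cap D^c\bigr)\leq\ncap\bigl(\mathbb{B}(x;R)\cap(D')^c\bigr),$$
so equality holds throughout. Plugging this equality into the characterization of $\rho_D$ and $\rho_{D'}$ provided by \Cref{L:2nddefinradius} immediately yields $\rho_D=\rho_{D'}$.

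The main subtlety is the passage from the notion of polarity (defined through subharmonic functions) to vanishing Newtonian capacity on the Borel set $\mathbb{B}(x;R)\cap(D'\setminus D)$; this is why reformulating $\rho_D$ via \Cref{L:2nddefinradius} is important, as it removes the constraint $x\in D$ and lets us compare the two capacity quantities directly on the same centered ball.
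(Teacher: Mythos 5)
Your proposal is correct and follows essentially the same route as the paper: one inequality comes from monotonicity of the capacity (since $(D')^c\subset D^c$), and the other from subadditivity together with the fact that the polar set $D'\setminus D$ contributes zero Newtonian capacity, after which the (reformulated) definition of $\rho$ gives the equality. Your explicit appeal to \Cref{L:2nddefinradius} and to the fact that compact polar sets have zero capacity merely spells out steps the paper leaves implicit.
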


  \begin{proof}
    First, note that $\rho_{D}\leq\rho_{D’}$ follows from the monotonicity of the strict Newtonian capacity. 
    Next, subadditivity of the Newtonian capacity yields
    \begin{align*}
      \ncap\left(\mathbb{B}(x;R)\cap D^c \right)&\leq
      \ncap\left(\mathbb{B}(x;R)\cap D’\setminus D \right)+\ncap\left(\mathbb{B}(x;R)\cap (D’)^c \right)\\
      &=\ncap\left(\mathbb{B}(x;R)\cap (D’)^c \right),
    \end{align*}
    where the last step follows from $D’\setminus D$ being polar. Hence, $\rho_{D’}\leq \rho_D$, which yields $\rho_D=\rho_D’$.
  \end{proof}

  This ties in with the fact that the Poincar{\'e} inequality is invariant under removal of polar sets. 
  That is, \eqref{E:PD} holds for an open set $D$ if and only if it holds for 
  an open set $D’$, with the same constant, whenever $D$ and $D’$ differ by a polar set. 
  This is due to the $L^2$-Sobolev-$1$-space being invariant under 
  removal of polar sets, see, e.g., \cite[pg. 93]{Fug99} and references therein.

  We conclude this subsection by showing that any  open set $D\subset\mathbb{R}^n$, with $\rho_D=\rho_{D_R}$ for some $R>0$, 
  may be associated to another open set 
  $\widehat{D}$ such that $\rho_D$ equals the inradius, $\mathcal{R}_{\widehat{D}}$, of $\widehat{D}$, i.e.,
  $$\mathcal{R}_{\widehat{D}}:=\sup\{R>0\;|\;\exists x\in\widehat{D}: \mathbb{B}(x;R)\subset \widehat{D}\}. $$
  For that, we denote  the family of subharmonic functions on $D$ which are bounded from the above on $D$ by $\SHU(D)$. We then
   associate the following open set $\widehat{D}$ to $D$:
  \begin{align*}
    \widehat{D}:=\bigcup\left\{U\subset\mathbb{R}^n \;\text{open}\;
    |\;D\subset U, \forall\varphi\in\SHU(D)\;\exists\varphi_U\in\SHU(D\cup U): 
    (\varphi_U)_{|_{D}}=\varphi\right\}.
  \end{align*}
  We first collect a few basic properties satisfied by $\widehat{D}$.
  \begin{lemma}\label{L:Dhatprops}
    Let $D\subset\mathbb{R}^n$, $n\geq 3$, be open. Then
    \begin{itemize}
      \item[(i)] $\widehat{D}$ is open.
      \item[(ii)] $\widehat{D}\setminus D$ is polar.
      \item[(iii)] For all $\varphi\in\SHU(D)$ there exists a unique $\widehat{\varphi}\in\SHU(\widehat{D})$ 
      such that $\widehat{\varphi}_{|_D}=\varphi$.
      \item[(iv)] $\widehat{D}$ is bounded whenever $D$ is.
    \end{itemize}
  \end{lemma}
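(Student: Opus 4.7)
Part (i) is immediate, as $\widehat{D}$ is defined to be a union of open sets. Parts (ii) through (iv) are the substantive content, and (iii)--(iv) follow from (ii), so the central step is the proof of (ii).

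The plan for (ii) is first to show, as a local claim, that for each open $U$ in the collection defining $\widehat{D}$ the set $U\setminus D$ is polar, and then to pass to the global statement via second countability. The local claim is a form of the classical converse to the removable singularity theorem for bounded-above subharmonic functions: if $U\setminus D$ had positive Newtonian capacity, there would exist a function $\varphi\in\SHU(D)$ admitting no extension in $\SHU(U)$. To produce such a $\varphi$, the plan is to pick a compact non-polar $K\subset U\setminus D$ and use the properties of the equilibrium potential $p_\mu$ of $K$ given by \Cref{L:basicproppot}---that $p_\mu$ is harmonic off $K$, attains the finite maximum value $I(\mu)$ on $K$ quasi-everywhere, and satisfies $p_\mu\leq I(\mu)$ globally---to build a bounded-above subharmonic function on $D$ that cannot be glued to a bounded-above subharmonic function across the non-polar set $K$. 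Once this local claim is established, second countability of $\mathbb{R}^n$ allows the extraction of a countable subfamily $\{U_k\}_{k\in\mathbb{N}}$ of the collection with $\widehat{D}=\bigcup_k U_k$; then $\widehat{D}\setminus D=\bigcup_k(U_k\setminus D)$ is a countable union of polar sets, and hence is polar by \Cref{L:basicpropcap}(iv).

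Parts (iii) and (iv) then follow from (ii) via classical arguments. For (iii), uniqueness of $\widehat{\varphi}$ holds because two bounded-above subharmonic functions on $\widehat{D}$ agreeing off the polar set $\widehat{D}\setminus D$ must agree everywhere, a standard consequence of the uniqueness in the removable singularity theorem. Existence is obtained directly from the classical removable singularity theorem: the set $E:=\widehat{D}\setminus D$ is polar by (ii) and closed in $\widehat{D}$ (since $D$ is open), so the upper semicontinuous regularization of $\varphi\in\SHU(D)$ defines a subharmonic function on $\widehat{D}$ that is bounded above by $\sup_D\varphi<\infty$. For (iv), assuming $D$ is bounded, (ii) gives $\widehat{D}\setminus D$ polar, so this set has Lebesgue measure zero and hence empty interior in $\mathbb{R}^n$. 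As $\widehat{D}$ is open, each $z\in\widehat{D}\setminus D$ has a ball $V\subset\widehat{D}$ around it; $V\setminus D\subset\widehat{D}\setminus D$ has empty interior, so $D$ is dense in $V$, forcing $z\in\overline{D}$. Therefore $\widehat{D}\subset\overline{D}$, which is bounded.

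The main obstacle is the converse of the removable singularity theorem invoked in the local step of (ii). Concretely, constructing from a compact non-polar subset $K$ of $U\setminus D$ an explicit element of $\SHU(D)$ with no extension in $\SHU(U)$ is the delicate point; the construction must leverage the full potential-theoretic strength of \Cref{L:basicproppot}, particularly the maximum-attainment property of $p_\mu$ on $K$.
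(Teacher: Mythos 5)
Your overall architecture matches the paper's: (i) is immediate from the definition, (ii) is reduced (via a countable subfamily and the fact that countable unions of polar sets are polar) to ruling out a single admissible open set $U\supset D$ with $U\setminus D$ nonpolar, (iii) is Brelot's extension theorem across the set $\widehat{D}\setminus D$, which is polar by (ii) and relatively closed in $\widehat{D}$, together with the fact that bounded-above subharmonic functions agreeing off a polar set agree, and (iv) follows from (ii); your argument that $\widehat{D}\subset\overline{D}$ when $D$ is bounded is in fact more explicit than the paper's one-line remark. (One small caveat: to conclude that a countable union of polar sets is polar you invoke \Cref{L:basicpropcap}(iv), i.e.\ subadditivity of capacity, but in the paper's framework zero capacity and polarity are only identified for compact sets; cite the classical fact about countable unions of polar sets, or exhaust the relatively closed sets $U_k\setminus D$ by compacta.)

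The genuine gap is exactly the point you flag yourself: the local step of (ii) is asserted, not proved. You say that from a compact nonpolar $K\subset U\setminus D$ one should ``build a bounded-above subharmonic function on $D$ that cannot be glued across $K$,'' hint that the construction should exploit the maximum-attainment property $p_{\mu_K}=I(\mu_K)$ on $K$ outside a polar set, and then declare this the main obstacle. That leaves the heart of the lemma unestablished, and the hinted mechanism is not the one that closes the argument. The paper's contradiction runs through behavior at infinity, not the values on $K$: take $\varphi:=p_{\mu_K}|_D$, which is harmonic on $D$ (since $K\cap D=\varnothing$), positive, and bounded above by $I(\mu_K)$, hence in $\SHU(D)$; a would-be extension in $\SHU(U)$ leads (see \cite[pg.~239]{HayKen76} for the details the paper defers to) to a positive function in $\SHU(\mathbb{R}^n)$ whose values approach zero at infinity, and the maximum principle applied on the balls $\mathbb{B}(0;R)$, letting $R\to\infty$, forces any such function to be nonpositive --- a contradiction with its positivity on $D$. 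Without this step, or a worked-out substitute for it, (ii) is not proved, and (iii) and (iv), which you correctly derive from (ii), are left hanging with it.
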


  \begin{proof}
    Part (i) follows directly from the definition of $\widehat{D}$.
 
    To see part (ii), note first that if $\widehat{D}\setminus D$ was nonpolar, then there would exist an open set $U\subset\widehat{D}$, 
    which contains $D$, such that 
    $U\setminus D$ is nonpolar and any function in $\SHU(D)$ extends to a function in $\SHU(D\cup U)$. This implies that the potential 
    function associated to 
    the equilibrium measure of a nonpolar, compact 
    subset of $U\setminus D$ extends to a positive function in $\SHU(\mathbb{R}^n)$ whose values approach zero at infinity. 
    This is impossible due to the maximum principle for subharmonic functions; for 
    more details on this argument, see \cite[pg. 239]{HayKen76}\footnote{Note that potential functions in \cite{HayKen76} differ 
    by a minus sign from our definition \eqref{E:potentialdefinition}.}.
 
    Part (iii) follows from (ii) by Theorem 5.18 in \cite{HayKen76}\footnote{This theorem is attributed 
    to Brelot \cite{Brelot34} in \cite{HayKen76}. We refer to the latter because of lack of accessibility to the former.}. 
    Part (iv) also follows from (ii), since, if $\widehat{D}$ was 
    unbounded, $\widehat{D}\setminus D$ would contain nonpolar sets.
  \end{proof}

  The point of the set $\widehat{D}$ is that it is obtained from $D$ by filling in polar holes, i.e., it is obtained by unifying $D$ with those  
  connected components of $bD$ which are polar and disconnected from the complement of $\overline{D}$.

  \begin{lemma}
    Let $D\subset\mathbb{R}^n$ be an open set such that $\rho_{D}=\rho_{D_R}$ for some $R>0$. Then 
    $\rho_D=\rho_{\widehat{D}}=\mathcal{R}_{\widehat{D}}$.
  \end{lemma}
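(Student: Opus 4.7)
The plan is to chain inequalities, with the polar-hole-filling property of $\widehat{D}$ doing the work. For the first equality $\rho_D = \rho_{\widehat{D}}$, I would simply combine Lemma \ref{L:Dhatprops}(ii), which states that $\widehat{D}\setminus D$ is polar, with the polar-invariance of $\rho$ recorded in Lemma \ref{L:rhopolarinv}.

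For the second equality $\rho_{\widehat{D}} = \mathcal{R}_{\widehat{D}}$, I would prove the two inequalities separately. The direction $\rho_{\widehat{D}} \geq \mathcal{R}_{\widehat{D}}$ is essentially free from Lemma \ref{L:2nddefinradius}: for any open ball $\mathbb{B}(x;R)\subset\widehat{D}$ the intersection $\mathbb{B}(x;R)\cap\widehat{D}^c$ is empty, so has vanishing Newtonian capacity, and therefore $R\leq\rho_{\widehat{D}}$. For the reverse direction I would invoke the preceding corollary: under the hypothesis $\rho_D=\rho_{D_R}$ it yields $\rho_D=\mathfrak{r}_D$. Hence for each $R'<\mathfrak{r}_D$ there is some $x\in\mathbb{R}^n$ with $\mathbb{B}(x;R')\cap D^c$ polar. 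The heart of the argument is then to verify that the open set $U:=D\cup\mathbb{B}(x;R')$ lies in the union defining $\widehat{D}$. Since $U\setminus D=\mathbb{B}(x;R')\cap D^c$ is polar, the classical polar-removability theorem for subharmonic functions---the same tool that underlies Lemma \ref{L:Dhatprops}(iii)---shows that every $\varphi\in\SHU(D)$ extends to a function in $\SHU(U)$, preserving the upper bound. Thus $U\subset\widehat{D}$, whence $\mathbb{B}(x;R')\subset\widehat{D}$ and $\mathcal{R}_{\widehat{D}}\geq R'$. Letting $R'\nearrow\mathfrak{r}_D$ closes the chain $\mathcal{R}_{\widehat{D}}\geq\mathfrak{r}_D=\rho_D=\rho_{\widehat{D}}\geq\mathcal{R}_{\widehat{D}}$.

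The main subtlety is the polar-hole-filling step. The hypothesis $\rho_D=\rho_{D_R}$ is precisely what upgrades the ``small capacity'' data in the definition of $\rho_D$ to the ``zero capacity'' data needed to invoke removability, through the preceding corollary. The removability itself is standard, so the real content of the statement is that the abstract construction of $\widehat{D}$ as the largest open superset to which all bounded-above subharmonic functions extend is \emph{geometric} enough to swallow every such ball, thereby converting the capacity-theoretic inradius $\rho_D$ into the classical inradius $\mathcal{R}_{\widehat{D}}$ of the enlarged set.
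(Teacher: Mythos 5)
Your proof is correct, but for the hard inequality it takes a genuinely different route from the paper. The paper first settles the bounded case by contradiction: assuming $\mathcal{R}_{\widehat{D}}<\rho_{\widehat{D}}$, it uses \Cref{L:pol=cap} to produce a ball $\mathbb{B}(x_0;R_1)$ with $\ncap(\mathbb{B}(x_0;R_1)\cap(\widehat{D})^c)=0$ that still meets $(\widehat{D})^c$, argues that this intersection must lie in $b\widehat{D}$, and derives a contradiction by comparing a nonpolar open piece of $(\overline{\widehat{D}})^c$ with the polar set $\mathbb{B}(x_0;R_1)\cap(\widehat{D})^c$; the general case is then reduced to the bounded one via $\widehat{D_R}\subset\widehat{D}\cap\mathbb{B}(0;R)$ and monotonicity of $\mathcal{R}$. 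You instead prove directly that $\mathcal{R}_{\widehat{D}}\geq\mathfrak{r}_D$ for \emph{any} open $D$, by exhibiting $U=D\cup\mathbb{B}(x;R')$ as a member of the family defining $\widehat{D}$: since $U\setminus D=\mathbb{B}(x;R')\cap D^c$ is relatively closed in $U$ (because $D$ is open) and polar, and each $\varphi\in\SHU(D)$ is globally bounded above, Brelot's removability theorem (the same Theorem 5.18 of Hayman--Kennedy used for \Cref{L:Dhatprops}(iii)) gives an extension whose upper-semicontinuous definition preserves the upper bound, so $\mathbb{B}(x;R')\subset\widehat{D}$; the hypothesis $\rho_D=\rho_{D_R}$ then enters only through the preceding corollary, $\rho_D=\mathfrak{r}_D$, closing the chain $\mathcal{R}_{\widehat{D}}\geq\mathfrak{r}_D=\rho_D=\rho_{\widehat{D}}\geq\mathcal{R}_{\widehat{D}}$. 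Your argument is shorter, avoids the boundary-structure analysis and the bounded/unbounded case split, and isolates the hole-filling mechanism more transparently; the only point you should state explicitly is the identification ``$\ncap(\mathbb{B}(x;R')\cap D^c)=0$ implies this Borel set is polar,'' which is standard (and tacitly used by the paper as well, e.g.\ in the contradiction step of its bounded case and in \Cref{L:pol=cap}), together with the relative closedness and local boundedness hypotheses of the removability theorem, which you do verify implicitly.
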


  \begin{proof}
     We first prove this lemma in the case that $D$ is bounded. For that, we note that $\widehat{D}\setminus D$ is 
     polar by \Cref{L:Dhatprops}.
     Thus, it follows from \Cref{L:rhopolarinv} that $\rho_D=\rho_{\widehat{D}}$. By definition of the inradii, we also have
     $\mathcal{R}_{\widehat{D}}\leq\rho_{\widehat{D}}$.  Suppose $\mathcal{R}_{\widehat{D}}<\rho_{\widehat{D}}$. 
     Let $R_0, R_1\in\mathbb{R}$ such that
    \begin{align*}
      \mathcal{R}_{\widehat{D}}<R_0<R_1<\rho_{\widehat{D}}.
     \end{align*}
    Then there exists an $x_0\in\mathbb{R}^n$ such that
    \begin{align}\label{E:tempcap0}
      \ncap\left(\mathbb{B}(x_0;R_1)\cap(\widehat{D})^c \right)&=0,\;\;\text{and}\\
      \mathbb{B}(x_0;R_0)\cap(\widehat{D})^c&\neq\varnothing.\notag
    \end{align}
    It follows that
   $$\mathbb{B}(x_0;R_0)\cap(\widehat{D})^c= \mathbb{B}(x_0;R_0)\cap b\widehat{D},$$ 
   since otherwise the intersection of $\mathbb{B}(x_0;R_0)$ with the interior of $(\widehat{D})^c$ would yield a nonempty and open, 
   hence, nonpolar set which is a contradiction to \eqref{E:tempcap0}.
 
   It suffices to show that there exists an open set $U$ containing $\mathbb{B}(x_0;R_0)\cap b\widehat{D}$ 
   such that $U\subset\widehat{D}\cup b\widehat{D}$. To wit, if such an open set $U$ exists, then $U\subset\widehat{D}$ 
   which implies that $\mathbb{B}(x_0;R_0)\cap b\widehat{D}=\varnothing$. 
   Hence, $\mathcal{R}_{\widehat{D}}\geq R_0$ would hold which is a contradiction. Since $R_0$ was 
   chosen arbitrarily in $(\mathcal{R}_{\widehat{D}},\rho_{\widehat{D}})$, 
   the proof of $\mathcal{R}_{\widehat{D}}=\rho_{\widehat{D}}$ for $D$ bounded would then be completed.
 
   If no such open set $U$ exists, then we could choose an open set $V$ such that
   $$\mathbb{B}(x_0;R_0)\cap b\widehat{D}\subset V \subset\mathbb{B}(x_0;R_1)$$ 
   and the intersection of $V$ with the interior of $(\widehat{D})^c$ is nonempty and open. Hence,
   \begin{align*}
     V\cap\left(\overline{\widehat{D}}\right)^c\subset\mathbb{B}(x_0;R_1)\cap(\widehat{D})^c.
   \end{align*}
   This is a contradiction as the set on the left hand side is nonpolar while the set on the right hand side is polar.
   
   Now suppose that $D\subset\mathbb{R}^n$ is such that $\rho_D=\rho_{D_R}$ for some $R>0$. Then
   $\rho_{D_R}=\mathcal{R}_{\widehat{D}_R}$ since $D_R$ is bounded. It follows from (ii) of \Cref{L:Dhatprops} and 
   Theorem 5.18 in \cite{HayKen76} that $\widehat{D_R}\subset\widehat{D}_R$. 
   Thus, we have so far
   $$\rho_D=\rho_{D_R}=\mathcal{R}_{\widehat{D_R}}\leq\mathcal{R}_{\widehat{D}_R}.$$
   But we also have by monotonicity, definition, and polarness of $\widehat{D}\setminus D$, that
   $$\mathcal{R}_{\widehat{D}_R}\leq\mathcal{R}_{\widehat{D}}\leq\rho_{\widehat{D}}=\rho_D,$$
   which concludes that proof.
  \end{proof}

  \subsection{The exterior derivative -- extensions and adjoints}\label{SS:extD}

  Denote by $(x_1,\dots,x_n)$, with $x_j\in\mathbb{R}$, the Euclidean coordinates of $\mathbb{R}^n$, and by $dx_j$  
  the differential of the coordinate $x_j$, $j\in\{1,\dots,n\}$. For $k\in\{1,\dots,n\}$ and a multi-index $I=(i_1,\dots,i_k)$, 
  $i_j\in\{1,\dots,n\}$, of length $k$, write $dx^I$ in place of $dx^{i_1}\wedge\dots\wedge dx^{i_k}$. 
  Recall that $\{dx^I: I=(i_1,\dots,i_k), i_j<i_{j+1}\}$ forms a basis of the space of differential $k$-forms. 
  That is,
  if $u$ is a differential $k$-form, then there exist unique functions $u_I$   such that
  \begin{align}\label{E:standardkformrepresentation}
    u=\psum_{|I|=k}u_I dx^I,
  \end{align}  
  where the prime indicates that the sum is taken only over increasing multi-indices.

  Let $D\subset\mathbb{R}^n$ be an open set. For $k\in\{0,1,\dots,n\}$, let $\Omega^k(D)$ be the space of $k$-forms whose 
  coefficient functions are smooth in $D$. That is, $\Omega^{0}(D)=\mathcal{C}^\infty(D)$, and
  $u\in\Omega^k(D)$ iff the coefficient functions $u_I$ in \eqref{E:standardkformrepresentation} belong to $\mathcal{C}^\infty(D)$. Similarly, 
  set $\Omega^k_c(D)$ to be the space of  differential $k$-forms whose 
  coefficient functions are smooth and have compact support in $D$.

  Denote by $L^2(D)$ the space of square-integrable functions, write $\|.\|_{L^2(D)}$  and $(.,.)_{L^2(D)}$ for the 
  norm and inner 
  product on $L^2(D)$, respectively. Further, write $L^2_k(D)$ for the space of differential $k$-forms with square-integrable coefficients; drop 
  the index when $k=0$. If
  $u\in L^2_k(D)$, then the $L^{2}_{k}(D)$-norm of $u$, represented as in \eqref{E:standardkformrepresentation}, is given by
  $$\left\|u\right\|_{L^2_k(D)}^2=\psum_{|I|=k}\left\|u_I\right\|^2_{L^2(D)}.$$ 
  $L^2_k(D)$ inherits the inner product from $L^2(D)$. 
  The exterior derivative $d_k$ is initially defined on $\Omega^k(D)$ by
  \begin{align*}
    d_k u=\psum_{|I|=k}\sum_{j=1}^n\frac{\partial u_I}{\partial x_j}dx^j\wedge dx^I
  \end{align*}
  for $u\in\Omega^k(D)$ as in \eqref{E:standardkformrepresentation}.
 
  To define the weak maximal extension of the exterior derivative, first extend $d_k$  to act on 
  $L^2_k(D)$ in the sense of distributions; denote the extension also by $d_k$. Then restrict its domain to the subspace
  $$\dom(d_k)=\left\{u\in L^2_k(D)\,|\, d_k u\in L^2_{k+1}(D) \right\}.$$ Then $d_k$ is a densely defined, closed operator on $L^2_k(D)$.
  We denote by $d_k^\star$ its Hilbert space adjoint.
  Its domain is given by
   \begin{align*}
     \dom(d_k^{\star})=\left\{v\in L^2_{k+1}(D)\,|\,\exists\;C>0:
     |(d_k u,v)_{L^2_{k+1}(D)}|\leq C\|u\|_{L^2_k(D)}\;
     \;\forall u\in\dom(d_k) \right\}. 
  \end{align*}
%
 
  We also consider the strong minimal extension, $d_{k,c}$, of $d_k$. This extension is obtained by 
  first restricting $d_k$ to $\Omega^k_c(D)$, and then taking the closure of the resulting operator  with respect to the graph norm
  $$u\mapsto \left(\|u\|_{L^2_k(D)}^2+\|d_k u\|_{L^2_{k+1}(D)}^2\right)^{1/2}.$$ Note that $\dom(d_{0,c})=H_0^1(D)$. Hence
  the Poincar\'{e} inequality \eqref{E:PD} may be reformulated in terms of $d_{0,c}$ as
  \begin{align}\label{E:dcompactclosedrange}
    \|f\|_{L^2(D)}\leq C\|d_{0,c} f\|_{L^2_1(D)}\;\;\sjump\forall f\in\dom(d_{0,c}).
  \end{align}

  \subsection{The Cauchy--Riemann operator as a densely defined $L^{2}$-operator}
  Write $z_{j}=x_{2j-1}+ix_{2j}$ for $(x_{1},\dots,x_{2n})$ the Euclidean coordinates of $\mathbb{R}^{2n}$, $dz^{j}$ 
  for the differential of 
  the coordinate $z_{j}$ and $d\bar{z}^{j}$
  for its conjugate . A $(0,k)$-form $d\bar{z}^{j_{1}}\wedge\dots\wedge d\bar{z}^{j_{k}}$ may be abreviated as $d\bar{z}^J$ 
  for the multi-index $J=(j_1,\dots,j_k)$.  In direct analogy to \Cref{SS:extD}, we 
  may represent any differential $(0,k)$-form $u$
  as
  \begin{align}\label{E:standard0kformrepresentation}
    u=\psum_{|J|=k}u_Jd\bar{z}^J,
  \end{align}
  for uniquely determined functions $u_J$. Also in complete analogy to \Cref{SS:extD}, for $D\subset\mathbb{C}^m$ open, 
  we may define $\Omega^{0,k}(D)$, $\Omega^{0,k}_c(D)$ and $L^2_{0,k}(D)$ as the spaces of $(0,k)$-forms with smooth, 
  smooth and compactly supported, and $L^2(D)$-integrable coefficients, respectively. The Cauchy--Riemann operator, defined as
  $$\dbar_{k}u=\psum_{|J|=k}\sum_{j=1}^m\frac{\partial u_J}{\partial \bar{z}_j}d\bar{z}^j\wedge d\bar{z}^J,$$
  for $u\in\Omega^{0,k}(D)$ represented as in \eqref{E:standard0kformrepresentation}. Similarly to \Cref{SS:extD}, we then consider the  
  weak maximal extension of $\dbar_k$, still calling it $\dbar_k$.
  This construction yet again yields a densely defined, closed operator on $L^2_{0,k}(D)$, 
  with a Hilbert space adjoint denoted by $\dbarstar_{k}$.

  \section{Proof of  \texorpdfstring{(1)$\Leftrightarrow$(4)}{(1)<=>(4)}}\label{S:(1)<=>(4)}
 
 
 The proof of (1)$\Leftrightarrow$(4) utilizes
 standard characterizations of the closed range property of linear, closed, densely defined Hilbert space operators. 
 If $$T:\dom(T)\subset\mathcal{H}_1\longrightarrow~ \mathcal{H}_2$$ is such an operator for some 
 Hilbert spaces $\{\mathcal{H}_j\}_{j=1,2}$, 
  then $T$ has closed range in $\mathcal{H}_2$ if and only if there exists a $C>0$ such that
  \begin{align}\label{E:closedrange}
     \|u\|_{\mathcal{H}_1}\leq C\|T u\|_{\mathcal{H}_2}\;\;\sjump\forall\;\;u\in\dom(T)\cap(\ker(T))^\perp,
  \end{align}    
  or, if and only if its Hilbert space adjoint, $T^\star$, has closed range in $\mathcal{H}_1$. 
  The closed range property of $T^\star$ may also be expressed as an estimate, that is, $T^\star$ has 
  closed range if and only if there exists a $C>0$ such that
  \begin{align}\label{E:closedrangeT*}
     \|v\|_{\mathcal{H}_2}\leq C\|T^\star v\|_{\mathcal{H}_1}\;\;\sjump\forall\;\;v\in\dom(T^\star)\cap(\ker(T^\star))^\perp.
   \end{align}    
   Note that the 
  best constants in the estimates \eqref{E:closedrange} and \eqref{E:closedrangeT*} for $T^\star$ are equal. See
  e.g., \cite[Theorem 1.1.1]{Hormander65} for proofs of these facts.

  Henceforth, if $d_k$ has closed range in $L^2_{k+1}(D)$, we write $\mathfrak{C}_k(D)$ for the best 
  such constant, otherwise $\mathfrak{C}_k(D)=\infty$. That is,
  \begin{align*}
     \mathfrak{C}_k(D)=\inf\Bigl\{C\in\mathbb{R}^+\cup\{\infty\}\,|\,\|u\|_{L^2_k(D)}
     \leq C\|d_k u\|_{L^2_{k+1}(D)}
     \;\forall
     u\in\dom(d_k)\cap(\ker(d_k))^\perp \Bigr\}.
  \end{align*}
 
  It follows from the observation leading to \eqref{E:dcompactclosedrange} that we may 
  reformulate the equivalence  (1)$\Leftrightarrow$(4) as follows.
  \begin{lemma}\label{L:closedrangePD}
    Let $D\subset\mathbb{R}^n$, $n\geq 1$, be an open set.  Then the following are equivalent.
    \begin{itemize}
       \item[(a)]  There exists a constant $C>0$ such that
       \begin{align}\label{E:closedrangedcompact}
          \|f\|_{L^2(D)}\leq C\|d_{0,c}f\|_{L^2_1(D)}\;\;\forall f\in \dom(d_{0,c}).
       \end{align}  
       \item[(b)]  There exists a constant $C>0$ such that
         $$\|u\|_{L^2_{n-1}(D)}\leq C\|d_{n-1}u\|_{L^2_n(D)}\;\;\forall u\in\dom(d_{n-1})\cap(\ker(d_{n-1}))^{\perp}.$$
     \end{itemize}
     The best constants in (a) and (b) are finite iff $\lambda_1(D)>0$; if they are finite, then they equal  $(\lambda_1(D))^{-1/2}$.
  \end{lemma}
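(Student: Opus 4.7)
The plan is to connect both estimates to $\lambda_1(D)$ by combining the closed range theorem recalled at the start of this section with the Hodge $\star$ isometry.

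First, I would observe that (a) is merely the Poincar\'{e} inequality \eqref{E:PD} rewritten in operator form. Indeed, $\dom(d_{0,c}) = H_0^1(D)$ and, for $f \in H_0^1(D)$, one has $\|d_{0,c} f\|_{L^2_1(D)}^2 = \|\nabla f\|_{L^2(D)}^2$, so that the best constant in (a) equals $(\lambda_1(D))^{-1/2}$ by the variational definition of $\lambda_1(D)$. In particular, (a) holds with a finite constant iff $\lambda_1(D)>0$. Moreover, $\ker(d_{0,c}) = \{0\}$ in general: any $f\in H_0^1(D)$ with $\nabla f=0$ extends by zero to an $H^1(\mathbb{R}^n)$-function that is locally constant, hence constant, and therefore must vanish as it lies in $L^2(\mathbb{R}^n)$.

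Next, I would apply the closed range theorem to $T = d_{n-1}$. Estimate (b) is precisely the closed range inequality \eqref{E:closedrange} for $d_{n-1}$ and is therefore equivalent, with the same best constant, to the adjoint inequality \eqref{E:closedrangeT*}:
\[
\|v\|_{L^2_n(D)} \leq C \|d_{n-1}^\star v\|_{L^2_{n-1}(D)} \sjump \forall\, v \in \dom(d_{n-1}^\star) \cap (\ker d_{n-1}^\star)^\perp.
\]
The key step is to identify $d_{n-1}^\star$ with $d_{0,c}$ via the Hodge star. Let $\star_k : L^2_k(D) \to L^2_{n-k}(D)$ denote the Hodge star, an isometric isomorphism for each $k$. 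An integration-by-parts argument, based on Stokes' theorem applied to smooth compactly supported test forms together with the pointwise identity $\alpha \wedge \star_k \beta = \langle \alpha, \beta\rangle\, dV$, would show that $\star_0$ maps $\dom(d_{0,c})$ bijectively onto $\dom(d_{n-1}^\star)$ and satisfies $d_{n-1}^\star \circ \star_0 = \pm \star_1 \circ d_{0,c}$. Since $\star_0$ and $\star_1$ are isometries and $\ker(d_{0,c}) = \{0\}$, this identification converts the adjoint inequality above into (a) with the same constant, yielding (a)$\Leftrightarrow$(b) and showing that the best constants in both are $(\lambda_1(D))^{-1/2}$.

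The principal difficulty is the Hodge star identification, in particular the verification that $\dom(d_{n-1}^\star)$ coincides exactly with $\star_0(\dom(d_{0,c}))$. This is the standard duality between the Hilbert space adjoint of a strong minimal extension and the weak maximal extension of its formal adjoint: in our setting, the formal adjoint of $d$ acting on $0$-forms is, up to sign, the Hodge-star conjugate of $d$ acting on $(n-1)$-forms, so their weak maximal and strong minimal extensions are dual in the Hilbert space sense.
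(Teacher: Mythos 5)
Your proposal is correct and follows essentially the same route as the paper: reduce (a) to the Poincar\'{e} inequality with $\ker(d_{0,c})=\{0\}$, use the closed range theorem to replace (b) by the adjoint estimate for $d_{n-1}^\star$, and identify $d_{n-1}^\star$ with $\pm\star d_{0,c}\star$ via the Hodge star. The only cosmetic difference is in how the key identification is justified: you invoke the abstract duality between the strong minimal and weak maximal extensions (equivalently, the double-adjoint argument), while the paper establishes the same fact by proving density of $\Omega^n_c(D)$ in $\dom(d_{n-1}^\star)$ in the graph norm, adapting Proposition~2.3 of \cite{Straube10}; both are legitimate proofs of the same step.
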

  \begin{proof}
    First observe that $\ker{d_{0,c}}=\{0\}$ as $H_0^1(D)$ 
    cannot contain any non-trivial functions which are constant on the connected components of $D$.  
    The latter can be seen by a proof analogous to the one given in \cite[Lemma 2.10]{GLR19}.
    This observation implies that (a) is equivalent to $d_{0,c}$ having closed range in $L^2_{1}(D)$, with the 
    best constant being equal to $(\lambda_1(D))^{-1/2}$.

   The remainder of the proof essentially follows from three basic facts:
   \begin{itemize}
      \item[(i)] the Hodge star operator~$\star$ is an isometry 
        between $L^2_k(D)$ and $L^2_{n-k}(D)$ and a bijection between $\Omega^k_c(D)$ 
        and $\Omega^{n-k}_c(D)$ for $0\leq k\leq n$, 
      \item[(ii)] the formal adjoint, $\vartheta_{n-1}$, of the differential operator $d_{n-1}$ is equal to $-\star d_{0}\star$,
      \item[(iii)] the strong minimal extension of $\vartheta_{n-1}$ is $d_{n-1}^\star$, i.e.,
     $d_{n-1}^\star=-\star d_{0,c}\star$.
   \end{itemize}
   Facts (i) and (ii) follow straightforwardly from the definition of the Hodge star operator. For (iii), note first that
   $d_{n-1}^\star$ and $\vartheta_{n-1}$ are equal on $\Omega^n_c(D)$. Furthermore, 
   $\Omega_c^n(D)$ is dense in $\dom(d^\star_{n-1})$ with respect to the graph norm given by
   $$v\mapsto\left(\|v\|^2_{L_n^2(D)}+\|d_{n-1}^\star v\|_{L^2_{n-1}(D)}^2 \right)^{1/2}.$$
   This may be shown analogously to the proof for (i) of Proposition~2.3 in \cite{Straube10}; just 
   replace $\overline{\partial}$ by $d_{n-1}$, hence $\overline{\partial}^\star$ by $d_{n-1}^\star$, therein, and note that the boundedness 
   assumption in \cite{Straube10} is irrelevant for the proof. Thus (iii) holds. 
 
    It follows from (i) and (iii) that
   $d_{0,c}=(-1)^n\star d_{n-1}^\star\star$, and
   \begin{align*}
     \star\dom(d_{0,c})=\dom(d_{n-1}^\star),\;\star\ker(d_{0,c})=\ker(d_{n-1}^\star),\\
     \star\dom(d_{n-1}^\star)=\dom(d_{0,c}),\;\star\ker(d_{n-1}^\star)=\ker(d_{0,c})
   \end{align*}
   Therefore, \eqref{E:closedrangedcompact}, with $v:=\star f$, is equivalent to
   $$\|v\|_{L^2_n(D)}\leq C\|d_{n-1}^\star v\|_{L^2_{n-1}(D)}\sjump\forall v\in\dom(d_{n-1}^*)\cap\ker(d_{n-1}^*)^\perp$$ 
   with the best constant equal to $(\lambda_1(D))^{-1/2}$. By the remark at the beginning of this section, this is equivalent to $d_{n-1}$  
   having closed range
   in $L^2_n(D)$ with $\mathfrak{C}_{n-1}(D)=(\lambda_1(D))^{-1/2}$.
  \end{proof}
 



  \section{Proof of \texorpdfstring{(2)$\Rightarrow$(1)}{(2)=>(1)}}\label{S:(2)=>(1)}


  We briefly elaborate on two different proofs of the implication (2)$\Rightarrow$(1). The first one is 
  based on the Kohn--Morrey--H\"ormander 
  formula for a twisted $\dbar$-complex in the sense of Ohsawa--Takegoshi, see Section 2.6 in \cite{Straube10} 
  and references therein ; the second one is due to Lee \cite[Lemma 7.6]{Lee06} and based on 
  a result of 
  Cheng--Yau \cite[pg. 345]{Cheng-Yau75}.
 
  \begin{lemma}\label{L:weightedestimate}
    Let $D\subset\mathbb{R}^n$ be an open set. Let $\phi\in\mathcal{C}^2(D)$ be bounded from above by $M\in\mathbb{R}$.
    Then
    \begin{align}\label{E:weightedestimate}
      \int_D\Delta\phi\cdot (\star w)^2 e^{\phi-M}dV\leq\|d_{n-1}^\star w\|_{L^2_{n-1}(D)}^2\;\;\sjump\;\;\forall w\in\Omega^n_c(D).
    \end{align}
  \end{lemma}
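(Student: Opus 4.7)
The plan is to reduce this weighted estimate on $n$-forms to a scalar integration-by-parts argument by invoking the Hodge duality already used in the proof of \Cref{L:closedrangePD}. Writing $w = f\,dx^1\wedge\cdots\wedge dx^n$ with $f\in\mathcal{C}^\infty_c(D)$, we have $(\star w)^2 = f^2$, and combining the identity $d_{n-1}^\star = -\star d_0 \star$ with the $L^2$-isometry property of $\star$ yields $\|d_{n-1}^\star w\|^2_{L^2_{n-1}(D)} = \|\nabla f\|^2_{L^2(D)}$. Thus the lemma is equivalent to the scalar inequality
\begin{align*}
\int_D (\Delta\phi)\, f^2\, e^{\phi-M}\, dV \leq \int_D |\nabla f|^2\, dV \qquad \forall\,f\in\mathcal{C}^\infty_c(D).
\end{align*}

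Next, I would integrate by parts on the left-hand side, which is justified since $f^2 e^{\phi-M}$ has compact support in $D$ and $\phi \in \mathcal{C}^2(D)$. Distributing $\nabla$ on the two factors of the test integrand produces a cross term of the form $-2\int_D f(\nabla\phi\cdot\nabla f)\,e^{\phi-M}\,dV$ together with a ``bad'' term $-\int_D|\nabla\phi|^2 f^2 e^{\phi-M}\,dV$. Applying the elementary inequality $2|ab|\leq a^2+b^2$ to the cross term with the natural split $a=|\nabla\phi|\,|f|\,e^{(\phi-M)/2}$ and $b=|\nabla f|\,e^{(\phi-M)/2}$ makes the two $|\nabla\phi|^2$ contributions cancel, leaving only $\int_D |\nabla f|^2 e^{\phi-M}\,dV$. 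Finally, the assumption $\phi\leq M$ yields $e^{\phi-M}\leq 1$, which dominates this by $\int_D|\nabla f|^2\,dV$ and closes the estimate.

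There is no real obstacle in the argument: the only care required is sign-tracking in the Hodge-star identities so that one lands on $(\star w)^2$ rather than $-(\star w)^2$, and verifying that the integration by parts is legitimate, which is immediate from compact support. Conceptually, the above is the simplest instance of the Kohn--Morrey--H\"ormander/Ohsawa--Takegoshi twisted $\dbar$-identity with twist weight $e^{\phi-M}$: on top-degree forms the curvature term collapses to the pure Laplacian $\Delta\phi$, and completing the square in the twisted Bochner identity produces exactly the displayed inequality.
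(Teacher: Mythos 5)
Your argument is correct, and it is a sound, self-contained derivation of \eqref{E:weightedestimate}; but it differs in presentation from the paper's proof, which does not carry out the computation at all. The paper simply observes that the twisted Kohn--Morrey--H\"ormander identity (Proposition 2.4 in \cite{Straube10}), applied to the $d$-complex on $\Omega^n_c(D)$ with twist factor $a=1-e^{\phi-M}$ and weight $\varphi=0$, yields the estimate in the same way that inequality (2.48) is derived in \cite[Lemma 2.6]{Straube10}, the boundary hypotheses there being vacuous for compactly supported forms. Your route instead reduces everything to the scalar statement
\begin{align*}
\int_D (\Delta\phi)\, f^2\, e^{\phi-M}\, dV \;\leq\; \int_D |\nabla f|^2\, dV,\qquad f\in\mathcal{C}^\infty_c(D),
\end{align*}
via $w=f\,dx^1\wedge\cdots\wedge dx^n$, $(\star w)^2=f^2$, and $\|d_{n-1}^\star w\|_{L^2_{n-1}(D)}=\|\nabla f\|_{L^2(D)}$ (using $d_{n-1}^\star=-\star d_{0}\star$ on compactly supported forms, exactly as in \Cref{L:closedrangePD}), and then proves it by integrating $\Delta\phi$ against the compactly supported $\mathcal{C}^2$ function $f^2e^{\phi-M}$ by parts and completing the square: the $|\nabla\phi|^2$ term produced by the cross term cancels the ``bad'' term, leaving $\int_D|\nabla f|^2e^{\phi-M}\,dV\leq\int_D|\nabla f|^2\,dV$ since $\phi\leq M$. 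As you note yourself, this is precisely the top-degree specialization of the twisted Bochner/KMH identity, so the two proofs rest on the same identity; what yours buys is independence from the machinery and hypotheses of \cite{Straube10} (making the lemma verifiable in a few lines), while the paper's citation-based proof makes transparent how the estimate sits inside the general twisted-complex framework that motivated the implication (2)$\Rightarrow$(4)$\Rightarrow$(1). The only points requiring care in your version, both of which you handle, are the justification of the integration by parts (compact support plus $\phi\in\mathcal{C}^2(D)$) and the identification of the Hilbert space adjoint with the formal adjoint on $\Omega^n_c(D)$.
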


  \begin{proof}
    We first note that Proposition 2.4 in \cite{Straube10} holds for the $d$-complex on $\Omega^{n}_c(D)$ 
    with the twist factor $a=1-e^{\phi-M}$ and weight $\varphi=0$. Furthermore, 
    we note that we may drop the boundedness and smoothness assumptions on the domain
    in Proposition 2.4 in \cite{Straube10} since we only consider compactly supported forms. 
    Inequality \eqref{E:weightedestimate} may then be derived analogously to inequality (2.48) in \cite[Lemma 2.6]{Straube10}. 
    Unlike in \cite[Lemma 2.6]{Straube10}, no geometric boundary assumptions are needed as, again, 
    we only consider compactly supported forms.
  \end{proof}

  \begin{corollary}\label{C:Hormanderlambda}
    Let $D\subset\mathbb{R}^n$ be an open set. Suppose $D$ admits a bounded function $\phi\in\mathcal{C}^2(D)$ such that
    $\Delta\phi\geq c$ on $D$ for some constant $c>0$. Then 
    \begin{align*}
     \|\omega\|_{L^2(D)}\leq C\|d_{0,c}\omega\|_{L^2_1(D)}\;\;\sjump\forall\;\;\omega\in\mathcal{C}_c^\infty(D)
    \end{align*} 
    holds for $C=\sqrt{e^{M-m}/c}$ where $m$ and $M$ are a lower and an upper bound of $\phi$ on $D$, respectively. 
    That is, the Poincar{\'e} inequality holds and $\lambda_1(D)\geq e^{m-M}c$.
  \end{corollary}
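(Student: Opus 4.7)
The plan is to apply the weighted estimate \eqref{E:weightedestimate} from \Cref{L:weightedestimate} to $w := \star\omega \in \Omega^n_c(D)$ and then translate the resulting inequality back into one on $\mathcal{C}_c^\infty(D)$ via the Hodge star formalism developed in the proof of \Cref{L:closedrangePD}.

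First I would bound the left-hand side of \eqref{E:weightedestimate} from below using the hypotheses on $\phi$: since $\Delta\phi\geq c$ and $m\leq\phi\leq M$, one has $\Delta\phi\cdot e^{\phi-M}\geq c\,e^{m-M}$ pointwise on $D$, so
\begin{align*}
  \int_D \Delta\phi\cdot(\star w)^2 e^{\phi-M}\,dV \;\geq\; c\,e^{m-M}\int_D (\star w)^2\,dV \;=\; c\,e^{m-M}\,\|\omega\|_{L^2(D)}^2,
\end{align*}
where the final equality uses $\star\star=\pm 1$ on $0$-forms in $\mathbb{R}^n$, hence $(\star w)^2=\omega^2$.

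Next I would rewrite the right-hand side. From facts (i)--(iii) recalled in the proof of \Cref{L:closedrangePD}, $\star$ is an $L^2$-isometry between $L^2_n(D)$ and $L^2(D)$ (and between $L^2_1(D)$ and $L^2_{n-1}(D)$), and $d_{0,c}=(-1)^n\star d_{n-1}^\star\star$. Applying the latter identity to $\omega$ gives $d_{n-1}^\star w = \pm\star d_{0,c}\omega$, so by the isometry property
\begin{align*}
  \|d_{n-1}^\star w\|_{L^2_{n-1}(D)} \;=\; \|d_{0,c}\omega\|_{L^2_1(D)}.
\end{align*}
Combining the two displays with \eqref{E:weightedestimate} yields $c\,e^{m-M}\|\omega\|_{L^2(D)}^2\leq\|d_{0,c}\omega\|_{L^2_1(D)}^2$, which is the asserted Poincar\'e inequality with $C=\sqrt{e^{M-m}/c}$; the eigenvalue bound $\lambda_1(D)\geq e^{m-M}c$ then follows at once from the variational definition of $\lambda_1(D)$ and the density of $\mathcal{C}_c^\infty(D)$ in $H_0^1(D)$.

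There is no real obstacle here beyond sign bookkeeping for the Hodge star on top- and bottom-degree forms; the entire content has already been packaged in \Cref{L:weightedestimate} and in the Hodge-star dictionary used to prove \Cref{L:closedrangePD}, so the corollary is essentially a direct readout of those two ingredients.
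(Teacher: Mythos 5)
Your proposal is correct and follows essentially the same route as the paper: apply \Cref{L:weightedestimate} to $w=\star\omega$, bound the weight factor below by $c\,e^{m-M}$, and transfer back via the Hodge-star isometry and the identity $d_{n-1}^\star=-\star d_{0,c}\star$. The paper's proof is exactly this argument, so nothing further is needed.
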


  \begin{proof}
    Let $\omega\in\mathcal{C}_c^{\infty}(D)$ be given, and set $w=\star \omega$. Then $w\in\Omega^n_c(D)$, so 
    that \eqref{E:weightedestimate} holds for $w$. In particular,
    $$ \int_D(\star w)^2 dV\leq C^2\|d_{n-1}^\star w\|_{L^2_{n-1}(D)}^2\;\;\sjump\;\;\forall w\in\Omega^n_c(D)$$
    holds with $C=\sqrt{e^{M-m}/c}$. It follows from the Hodge star operator being an isometry 
    between $L^2(D)_*$ and $L^2_{n-*}(D)$ and 
    the identity $d_{n-1}^\star=-\star d_{0,c}\star$ that
    \begin{align*}
      \|\omega\|_{L^2(D)}=\|\star w\|_{L^2(D)}\leq C\|d_{n-1}^\star w\|_{L^2_{n-1}(D)}=C\|d_{0,c}\omega\|_{L^2_1(D)}.
    \end{align*}  
    However, this means that the Poincar\'e inequality holds with $\lambda_1(D)\geq ce^{m-M}$.
  \end{proof}

  The following lemma, in a more general setting, is due to Lee \cite[Lemma 7.6]{Lee06}\footnote{Lee uses 
  $\Delta=-\sum_{j=1}^n\frac{\partial^2}{\partial x_j^2}$.}
  \begin{lemma}\label{L:Lee}
    Let $D\subset\mathbb{R}^n$ be an open set. Suppose there exist a positive 
    function $\phi\in\mathcal{C}^2(D)$ and a constant $\lambda>0$ such that $-\Delta\phi/\phi\geq\lambda$ on $D$. 
    Then the Poincar{\'e} inequality holds and $\lambda_1(D)\geq\lambda$.
  \end{lemma}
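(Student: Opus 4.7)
The plan is to prove the estimate by testing the hypothesis against a well-chosen auxiliary function and integrating by parts in the standard Barta/Cheng--Yau manner. Since the inequality is a density statement, it suffices to establish $\lambda\|u\|_{L^2(D)}^2\leq\|\nabla u\|_{L^2(D)}^2$ for all $u\in\mathcal{C}_c^\infty(D)$.

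First, I would fix such a $u$ and introduce the test function $v:=u^2/\phi$. Since $\phi\in\mathcal{C}^2(D)$ is strictly positive, $\phi$ is bounded away from zero on the compact set $\supp(u)$, so $v\in\mathcal{C}_c^2(D)$ with $\supp(v)\subset\supp(u)$. The hypothesis $-\Delta\phi/\phi\geq\lambda$, together with $\phi>0$, is equivalent to the pointwise inequality $-\Delta\phi\geq\lambda\phi$ on $D$. Multiplying by $v\geq 0$ and integrating yields
\begin{align*}
  \lambda\int_D u^2\,dV \;\leq\; -\int_D\Delta\phi\cdot\frac{u^2}{\phi}\,dV.
\end{align*}

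Next, I integrate by parts. As $v$ has compact support in $D$, no boundary term arises, and
\begin{align*}
  -\int_D\Delta\phi\cdot\frac{u^2}{\phi}\,dV
  \;=\; \int_D\nabla\phi\cdot\nabla\!\left(\frac{u^2}{\phi}\right)dV
  \;=\; \int_D\frac{2u}{\phi}\,\nabla u\cdot\nabla\phi\,dV \;-\; \int_D\frac{u^2|\nabla\phi|^2}{\phi^2}\,dV.
\end{align*}
The key cancellation comes from the AM--GM bound
\begin{align*}
  \frac{2u}{\phi}\,\nabla u\cdot\nabla\phi \;\leq\; |\nabla u|^2 + \frac{u^2|\nabla\phi|^2}{\phi^2},
\end{align*}
which when inserted above causes the two $|\nabla\phi|^2/\phi^2$ terms to cancel, leaving
\begin{align*}
  \lambda\int_D u^2\,dV \;\leq\; \int_D|\nabla u|^2\,dV.
\end{align*}
This is precisely \eqref{E:PD} with constant $C=\lambda^{-1/2}$, and taking the infimum over $u\in H_0^1(D)\setminus\{0\}$ (after the usual density argument) gives $\lambda_1(D)\geq\lambda$.

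The only subtle points are technical, not conceptual: one must verify that $u^2/\phi$ genuinely lies in $\mathcal{C}_c^2(D)$ so that the integration by parts is legitimate (this is immediate from $\phi>0$ and $\phi\in\mathcal{C}^2$), and one must be careful that all integrals are finite (immediate from compact support of $u$). There is no serious obstacle; the essence of the argument is the exact cancellation of the $|\nabla\phi|^2/\phi^2$ terms after applying AM--GM, which is what makes the Cheng--Yau trick so clean.
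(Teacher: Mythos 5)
Your proof is correct and is essentially the paper's own argument (Lee's, after Cheng--Yau): integrating $-\Delta\phi\geq\lambda\phi$ against $u^2/\phi$ and integrating by parts is exactly the paper's vanishing of $\int_D d_0^{\star}\bigl(u^2\phi^{-1}d_0\phi\bigr)\,dV$, and your AM--GM step is the pointwise statement that the completed square $\bigl|\phi\,\nabla(\phi^{-1}u)\bigr|^2$ is nonnegative, which is precisely the identity the paper uses.
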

  \begin{proof}
    Lee first proves that 
    $$0=\int_D d_0^{\star}\left(\omega^2\phi^{-1}d_0\phi\right)\;dV$$
    holds for all $\omega\in\mathcal{C}^\infty_c(D),$
    which then is used to derive the identity
    $$\int_D \left(\omega^2\phi^{-1}\Delta\phi+|\nabla \omega|^2\right)\;dV=\left\|\phi d_0(\phi^{-1}\omega)
     \right\|_{L^2_{1}(D)}^2.$$
    The non-negativity of the right hand side and the hypothesis on $\phi$ then imply that the Poincar{\'e} inequality holds with 
    $\lambda_1(D)\geq \lambda$.
  \end{proof}
  Note that, with the hypotheses of \Cref{C:Hormanderlambda}, \Cref{L:Lee} yields $\lambda_1(D)\geq c/(\widetilde{M}-m)$ for any 
  $\widetilde{M}>M$. That is, \Cref{L:Lee} yields a better lower bound for $\lambda_1(D)$ than \Cref{C:Hormanderlambda}.


  \section{Proof of  \texorpdfstring{(3)$\Rightarrow$(2)}{(3)=>(2)}}\label{S:(3)=>(2)}


  \begin{proof}[Proof of (3)$\Rightarrow$(2) of \Cref{T:MainTheorem}]
    Suppose $\rho_{D}$ is finite. Let $M>\rho_{D}$. By \Cref{L:2nddefinradius} there exists a $\delta>0$ such that for all $x\in\mathbb{R}^n$
    \begin{align*}
      \ncap\left(\mathbb{B}(x,M)\cap D^{c} \right)\geq 2\delta.
    \end{align*}
    This lets us choose a sequence of well-spread out, compact sets whose Newtonian capacity is greater than or equal to $\delta$.
    In fact, define $\mathcal{Q}(Nm,L)$, $m\in\mathbb{Z}^n$, $N, L\in\mathbb{N}$, to be the closed $n$-cube 
    with center $Nm$ and side length $L$.
    Then, for any $m\in\mathbb{Z}^n$, we may choose a compact set $K_m\subset D^{c}$ such that
    \begin{align*}
      K_m\Subset \mathcal{Q}(2Mm,2M) \text{ and }\ncap(K_{m})\geq\delta.
    \end{align*}
    For each $m\in\mathbb{Z}^n$, let $\mu_{m}$ be the equilibrium measure of $K_m$. Let $p_{m}$ be the potential function associated to
   $\mu_{m}$. By \Cref{L:basicproppot}, $p_m$ is in $\mathcal{C}^{\infty}(D)$, harmonic on $D$, and 
   \begin{align}\label{E:basicestp}
     0< p_m\leq 1/\ncap(K_m)\leq1/\delta.
   \end{align}  
   In the following, we shall show that 
   \begin{align}\label{E:1stdefphi}
     \sum_{m\in\mathbb{Z}^{n}}e^{-\frac{4}{\delta p_m}}
   \end{align} 
   is a well-defined, bounded, smooth, subharmonic function on $D$ whose Laplacian has a uniform 
   positive lower bound on $D$. For sake of brevity, we will
   write ``$A\lesssim B $" for non-negative quantities $A$ and $B$ to mean that there exists some 
   constant $c>0$ depending on $n$ and $M$ such that $A\leq c B$ holds.
  
   To show that \eqref{E:1stdefphi} yields a well-defined function at any given $x\in D$, we order $\mathbb{Z}^n$, 
   roughly speaking, by distance to $x$. For that, we equip $\mathbb{Z}^n$ with the
   maximum norm $\|.\|_{\max}$, i.e.,
   $$\|m\|_{\max}:=\max_{j\in\{1,\ldots,n\}}
   \left\{|m_j|\right\}\;\;\text{for}\;\;m=(m_1,\ldots,m_n)\in\mathbb{Z}^n.$$ 
   Then, for each $x\in D$, we choose a $m(x)\in\mathbb{Z}^n$ such that $x\in\mathcal{Q}(2Mm(x),2M)$, and define
   \begin{align*}
     A_\lambda(x):=\Bigl\{m\in\mathbb{Z}^n\,|\,\lambda=\|m(x)-m\|_{\max}\Bigr\}\;\;\text{for}\;\;\lambda\in\mathbb{N}_0.
  \end{align*}
  Now, consider 
  $$\phi(x):=\sum_{\lambda\in\mathbb{N}_0}\sum_{m\in A_{\lambda}(x)}e^{-\frac{4}{\delta p_m(x)}}\;\;\text{for}\;\;x\in D.$$
  To see that $\phi(x)$ exists for all $x\in D$, we first determine the cardinality of $A_\lambda(x)$. Clearly, $A_0(x)=\{m(x)\}$, and
  \begin{align*}
    \card\Bigl(\bigcup_{k=0}^\ell A_k(x)\Bigr)=\card\left(\mathbb{Z}^n\cap[-\lambda,\lambda]^n\right)
    =(2\lambda+1)^n\sjump\;\;\forall\ell\in\mathbb{N}_0.
  \end{align*}
  Since
  $$A_\lambda(x)=\bigcup_{k=0}^\lambda A_{k}(x)\setminus \bigcup_{k=0}^{\lambda-1} A_{k}(x),$$
  it follows that $\card(A_{\lambda}(x))=(2\lambda +1)^n-(2\lambda-1)^n$. In particular, there exists a constant $c(n)>0$ such that
  $\card(A_{\lambda}(x))\leq c(n)\lambda^{n-1}$ for all $x\in D$ and $\lambda\in\mathbb{N}_{0}$.
  For $m\in A_{\lambda}(x)$, $\lambda\geq 2$, we may estimate 
  \begin{align}\label{E:Kmlambdadistance}
    2(\lambda-1)M\leq |x-y|\leq2\sqrt{n}(\lambda+1) M\sjump \;\;\forall\;y\in K_m.
  \end{align}
  Hence, if $\lambda\geq 2$, $y\in K_m$ and $m\in A_\lambda(x)$, it follows that
  $$p_m(x)\leq \left(2(\lambda-1) M\right)^{2-n}\int_{\mathbb{R}^n}1\;d\mu_m(y)
  = \left(2(\lambda-1) M\right)^{2-n},$$
  which implies that $$e^{-\frac{4}{\delta p_m(x)}}\leq e^{-\frac{4}{\delta}\left(2(\lambda-1) M\right)^{n-2}}.$$
  This, together with \eqref{E:basicestp}, implies that
  \begin{align*}
     \phi(x)&\leq \card\left(A_0(x)\cup A_1(x)\right)e^{-4}+c(n)\sum_{\lambda=2}^\infty
     \card\left(A_\lambda(x) \right)e^{-\frac{4}{\delta}\left(2(\lambda-1) M\right)^{n-2}}\\
     &\leq (3^n+1)e^{-4}+c_n\sum_{\lambda=2}^{\infty}\lambda^{n-1}e^{-\frac{4}{\delta}\left(2(\lambda-1) M\right)^{n-2}}<\infty,
  \end{align*} 
  i.e., the series $\phi(x)$ is convergent. In fact, it follows from the Weierstra\ss\,$M$-test 
  that the series $\phi$ converges absolutely uniformly on $D$, thence $\phi$ is continuous on $D$. 
  It also follows that the value of $\phi(x)$ is independent of the enumeration of $\mathbb{Z}$ in the sum, 
  in particular, 
  $$\phi(x)=\sum_{m\in\mathbb{Z}^n}e^{-\frac{4}{\delta p_m(x)}}\sjump\forall x\in D.$$ Finally, 
  the above estimate shows that $\phi$ is bounded on $D$.
   
   To show that $\phi\in\mathcal{C}^\infty(D)$, it suffices to show that for any given $x\in D$,  $j\in\mathbb{N}$ and
   \begin{align}\label{D:Djdefinition}
     \mathcal{D}_j:=\frac{\partial^{j}}{\partial x_1^{j_1}\dots\partial x_n^{j_n}}
   \end{align}
   with $j_1+\dots+j_n=j$, the series
   \begin{align}\label{E:derivativephi}
     \sum_{\lambda=0}^\infty\sum_{m\in A_\lambda(x)}\mathcal{D}_j\left(e^{-\frac{4}{\delta p_m(x)}} \right) 
   \end{align}  
   is absolutely uniformly convergent on some neighborhood of  $x$. Roughly speaking, this convergence follows because
   $\mathcal{D}_j\left(e^{-\frac{4}{\delta p_m(x)}} \right)$ is the finite sum of product of two terms, one decreases like 
   (positive powers of) $e^{-\lambda}$ 
   while the other grows at most polynomially in $\lambda$ 
   when $\lambda\to\infty$. 
   To wit, let $x\in D$ be given. Let $j\in\mathbb{N}$, and $\mathcal{D}_j$ be some differential operator as in
   \eqref{D:Djdefinition}. Then, by Fa{\'a} di Bruno's formula, see, e.g.,  \cite{Hardy06}, it follows that
   \begin{align*}
     \mathcal{D}_j(e^{-\frac{4}{\delta p_m}})=\sum_{\pi\in\Pi}\left(\frac{\partial^{|\pi|}}{\partial t^{|\pi|}}
     e^{-\frac{4}{\delta t}} \right)_{|_{t=p_m}}\prod_{B\in\pi}\frac{\partial^{|B|}p_{m}}{\prod_{k\in B}\partial \xi_k},
   \end{align*} 
   where $\Pi$ is the set of all partitions of $\{1,\dots,j\}$, $|\pi|$ the cardinality of an element $\pi$ of $\Pi$,
   and 
   $$\textstyle\frac{\partial^{j_1}}{\partial x_1^{j_1}}=\frac{\partial}{\partial \xi_1}\dots\frac{\partial}{\partial \xi_{j_1}},\;
   \frac{\partial^{j_2}}{\partial x_2^{j_2}}=\frac{\partial}{\partial \xi_{j_1+1}}\dots\frac{\partial}{\partial \xi_{j_1+j_2+1}},\dots\;\;.$$
   For $\lambda\geq 2$ and $m\in A_{\lambda}(x)$, it follows from \eqref{E:Kmlambdadistance} that
   \begin{align*}
      \left(\frac{\partial^{j}}{\partial t^{j}}
     e^{-\frac{4}{\delta t}} \right)_{|_{t=p_m(x)}}\lesssim
     e^{-\frac{4}{\delta}\left(2(\lambda-1) M\right)^{n-2}}\left(2\sqrt{n}(\lambda+1) M\right)^{(n-2)2j}.
   \end{align*}
  Similarly, we may estimate for $m\in A_\lambda$ with $\lambda\geq 2$ and $B\in\pi$, that
   $$
     \left|\frac{\partial^{|B|}p_{m}}{\prod_{k\in B}\partial \xi_k}\right|\lesssim \left(2(\lambda-1) M\right)^{2-n}.
   $$
  This implies that there is a positive integer $d$, independent of $x$, so that
   \begin{align*}
     \sum_{\lambda=2}^\infty\sum_{m\in A_\lambda(x)}\left|\mathcal{D}_j\left(e^{-\frac{4}{\delta p_m(x)}} \right) \right|
     \lesssim\sum_{\lambda=2}^\infty e^{-\frac{4}{\delta}\left(2(\lambda-1) M\right)^{n-2}}\lambda^d.
   \end{align*}
   The terms for $\lambda\in\{0,1\}$ may be estimated similarly. Hence, the series in \eqref{E:derivativephi} is 
   absolutely uniformly convergent on $D$, and $\phi\in\mathcal{C}^\infty(D)$.

   It remains to be shown that there is a constant $c>0$ such that $\Delta\phi\geq c$ on $D$. Using that $p_m$ is 
   harmonic in $D$ for all $m\in\mathbb{Z}^n$, a straightforward computation
   yields
   \begin{align*}
     \Delta\left(e^{-\frac{4}{\delta p_m}}\right)=e^{-\frac{4}{\delta p_m}} \frac{8}{\delta p_m^3}\left|\nabla p_m\right|^2
     \left(\frac{2}{\delta p_m}-1 \right).
   \end{align*}
   Hence, by \eqref{E:basicestp}, it follows that
   \begin{align}\label{E:trianglephim}
      \Delta\left( e^{-\frac{4}{\delta p_m}} \right)\geq e^{-\frac{4}{\delta p_m}}8\delta^2\left|\nabla p_m\right|^2\geq 0.
   \end{align}
   In particular, $\phi$ is subharmonic on $D$. To prove that there exists a constant $c>0$ such that $\Delta\phi\geq c$ holds on 
   $D$, it now suffices to show that there exists a $c_1>0$ such that for all $x\in D$ 
   there exists an $m_x\in\mathbb{Z}^n$ with
   $$ \Delta\left( e^{-\frac{4}{\delta p_{m_x}(x)}} \right)\geq c_1.$$
   Let $x\in D$ be given. Recall that $m(x)\in\mathbb{Z}^n$ is chosen such that $x\in\mathcal{Q}(2Mm(x), 2M)$. 
   Set $m_x=m(x)-(2,0,\dots,0)$, and observe that
   $$ 2M\leq |x-y|\leq \sqrt{2^2(n-1)^2+6^2}M\;\;\forall y\in K_{m_x}$$
   holds.
   It then follows from \eqref{E:trianglephim} that
   \begin{align}\label{E:estimatesontriangle}
      \Delta\left( e^{-\frac{4}{\delta p_{m_x}}} \right)\geq c_2\left|\nabla p_{m_x}(x)\right|^2
   \end{align}
   for some $c_2>0$ independent of $x$. To estimate $|\nabla p_{m_x}(x)|$, notice that $x_1-y_1\geq 2M$, so that
   \begin{align*}
     \left|\nabla p_{m_x}(x)\right|\geq \left|\frac{\partial p_{m_x}}{\partial x_1} \right|
     =(n-2)\int_{\mathbb{R}^n}|x-y|^{-n}(x_1-y_1)\,d\mu_{m_x}(y)
     \gtrsim M^{-n+1}.
   \end{align*}
   This, together with estimate \eqref{E:estimatesontriangle}, implies that there is a positive constant $c$ 
   such that $\Delta\phi\geq c$ on $D$.
 \end{proof}


\section{Proof of  \texorpdfstring{(1)$\Rightarrow$(3)}{(1)=>(3)}}\label{S:(1)=>(3)}


  The proof of (1)$\Rightarrow$(3) is based on the one given in \cite[Section 3.2]{GLR19}. We give brief explanations 
  when the arguments are analogous, otherwise we elaborate. Moreover, some arguments in \cite[Section 3.2]{GLR19}, 
  which lack in detail, are 
  described in full here, see, e.g., \Cref{L:smoothunion}.

  \begin{proposition}\label{L:smoothK}
    Let $K\subset\mathbb{R}^n$ be a compact set such that $\mathbb{B}\cap K\neq\varnothing$. Then, for any $\epsilon>0$, there 
    exists a relatively compact set $K_\epsilon\subset \mathbb{B}$ such that
    \begin{itemize}
      \item[(i)] $K\cap\mathbb{B}\subset K_\epsilon$,
      \item[(ii)] $\ncap(K_\epsilon)\leq\ncap(K)+\epsilon$,
      \item[(iii)] $\mathbb{B}\setminus K_\epsilon$ has smooth boundary.
    \end{itemize}
  \end{proposition}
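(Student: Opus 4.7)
The plan is to apply the outer regularity of the Newtonian capacity to enlarge $K$ to an open neighborhood of slightly larger capacity, and then to carve out of this neighborhood a closed set containing $K$ with smooth boundary via a mollified distance function and Sard's theorem.

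\textbf{Outline.} By outer regularity of the Newtonian capacity on Borel sets, equation \eqref{E:outerreg}, I would first select an open set $U$ with $K\subset U$ and $\ncap(U)<\ncap(K)+\epsilon/2$. Since $K$ is compact and contained in the open set $U$, there exists $\delta_0>0$ such that the open $\delta_0$-neighborhood $K^{\delta_0}:=\{x\in\mathbb{R}^n:\dist(x,K)<\delta_0\}$ is contained in $U$. Set $d(x)=\dist(x,K)$ and let $d_\eta=d*\varrho_\eta$ for a standard mollifier $\varrho_\eta$ supported in $\mathbb{B}(0;\eta)$; then $d_\eta\in\mathcal{C}^\infty(\mathbb{R}^n)$ with $|d_\eta-d|\leq\eta$ uniformly. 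Fixing $\eta\in(0,\delta_0/4)$ and using Sard's theorem to choose a regular value $s\in(\eta,\delta_0/2)$ of $d_\eta$, the closed set $\tilde{K}:=\{d_\eta\leq s\}$ contains $K$ (since $d_\eta\leq\eta<s$ there), is contained in $U$ (since $d_\eta\leq s$ forces $d\leq s+\eta<\delta_0$), and has smooth boundary $\{d_\eta=s\}$.

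Defining $K_\epsilon:=\tilde{K}\cap\mathbb{B}$, one gets a bounded, hence relatively compact, subset of $\mathbb{B}$ satisfying (i) by construction, and (ii) by monotonicity of Newtonian capacity (\Cref{L:basicpropcap}(iii)):
$$\ncap(K_\epsilon)\leq\ncap(\tilde{K})\leq\ncap(U)<\ncap(K)+\epsilon.$$

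\textbf{Main obstacle.} The delicate point is condition (iii): the boundary of $\mathbb{B}\setminus K_\epsilon$ consists of a portion of $b\mathbb{B}$ and a portion of $\{d_\eta=s\}$, and these two smooth hypersurfaces may meet along a codimension-two set, creating corners. To promote this to a globally smooth boundary I would apply Sard's theorem to the smooth map $x\mapsto(d_\eta(x),|x|^2)$ to arrange that $\{d_\eta=s\}$ and $b\mathbb{B}$ meet transversally (perhaps slightly adjusting the radius of $\mathbb{B}$ via the scaling in \Cref{L:basicpropcap}(ii)), and then round the resulting codimension-two corner by a small smooth perturbation of $d_\eta$ supported in a small tubular neighborhood of the intersection. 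Since the initial enlargement only used half the error budget, and since the Newtonian capacity is well-behaved under small smooth perturbations, the corner-rounding can be kept within the remaining $\epsilon/2$, yielding a $K_\epsilon$ with all three required properties.
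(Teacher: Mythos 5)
Your outline follows the same route as the paper: outer regularity to enlarge $K$ into an open set $U$ of nearly the same capacity, Sard's theorem applied to a smooth function separating $K$ from $U^c$ (the paper uses a Urysohn function where you use a mollified distance function -- an immaterial difference) to produce a smoothly bounded intermediate set, and then a correction where its boundary meets $b\mathbb{B}$. The difficulty you correctly identify -- smoothing the codimension-two corner where $\{d_\eta=s\}$ meets $b\mathbb{B}$ -- is, however, exactly where the paper's work lies: it is the content of a separate lemma (\Cref{L:smoothunion}), proved by an explicit construction, namely replacing $\min\{r_1,r_2\}=\frac12\bigl(r_1+r_2-\sqrt{(r_1-r_2)^2}\bigr)$ by $\frac12\bigl(r_1+r_2-\sqrt{(r_1-r_2)^2+\epsilon^2\chi_\epsilon((r_1-r_2)^2)}\bigr)$ and verifying that the modified defining function has nonvanishing gradient on its zero set and changes the region only inside a prescribed neighborhood $W$ of the corner. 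In your proposal this step is only asserted (``round the resulting codimension-two corner by a small smooth perturbation''), so as written the argument has a gap precisely at the step that the rest of the construction is designed to reduce to; to be complete you would need to supply such a rounding construction, including the verification that the new boundary is a regular level set and coincides with the old one outside a small neighborhood of the corner.

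Two smaller points. First, you cannot ``slightly adjust the radius of $\mathbb{B}$'': the unit ball is fixed by the statement, since $K_\epsilon$ must contain $K\cap\mathbb{B}$ and $\mathbb{B}\setminus K_\epsilon$ must be smooth for this particular $\mathbb{B}$. Transversality should instead be arranged by perturbing the level $s$: apply Sard both to $d_\eta$ and to its restriction to the sphere $b\mathbb{B}$, so that for almost every $s$ the level set $\{d_\eta=s\}$ is a regular hypersurface meeting $b\mathbb{B}$ transversally (this is what the paper does by ``possibly slightly decreasing $\tau$''). Second, the capacity control for the rounding should not rest on a vague stability of $\ncap$ under perturbations; the clean argument is either monotonicity -- the rounded set can be kept inside $\{d<\delta_0\}\subset U$, since the perturbation enlarges $\tilde K$ only inside a small neighborhood of the corner, which lies at positive distance from $K$ -- or, as in the paper, outer regularity applied to choose the corner neighborhood $W$ with $\ncap(\tilde K\cup W)\leq\ncap(\tilde K)+\epsilon/2$. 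With the rounding lemma supplied and these two repairs, your argument coincides with the paper's proof.
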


  To prove \Cref{L:smoothK}, we use the following smooth approximation of  the union of two smoothly bounded, open sets.
  \begin{lemma}\label{L:smoothunion}
    Let $\Omega_j\Subset\mathbb{R}^n$, $j\in\{1,2\}$, be smoothly bounded, open sets. 
    Assume that $S=b\Omega_1\cap b\Omega_2$ is non-empty and that the intersection is transversal. 
    Let $W$ be an open neighborhood of $S$.
    Then there exists an open, smoothly 
    bounded set $\Omega$ such that
    \begin{itemize}
      \item[(a)] $\overline{\Omega}_1\cup\overline{\Omega}_2\subset\overline{\Omega}\subset
        \overline{\Omega}_1\cup\overline{\Omega}_2\cup W$,
      \item[(b)] $b\Omega\setminus W=b(\Omega_1\cup\Omega_2)\setminus W$.
    \end{itemize}
  \end{lemma}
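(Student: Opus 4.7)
The plan is to realize the union's smoothing as the sublevel set of a ``smoothed minimum'' of the two defining functions, using the transversality of $S$ to guarantee the resulting level set is regular.

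Let $\rho_1,\rho_2\in\mathcal{C}^\infty(\mathbb{R}^n)$ be defining functions for $\Omega_1,\Omega_2$, i.e., $\Omega_j=\{\rho_j<0\}$ and $d\rho_j\neq 0$ on a neighborhood of $b\Omega_j$. Then $\Omega_1\cup\Omega_2=\{\min(\rho_1,\rho_2)<0\}$, but the minimum has a corner along $\{\rho_1=\rho_2\}\supset S$. The idea is to round off this corner only in a prescribed small tube around $S$.

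First, I would construct a smooth approximation of the absolute value. For $\epsilon>0$, produce $\phi_\epsilon\in\mathcal{C}^\infty(\mathbb{R})$ that is even, convex, agrees with $|t|$ for $|t|\geq\epsilon$, and satisfies $\phi_\epsilon(t)\geq|t|$ everywhere. Such a $\phi_\epsilon$ is obtained by convolving $|t|$ against a nonnegative symmetric mollifier supported in $[-\epsilon/2,\epsilon/2]$; convexity then forces $|\phi_\epsilon'|\leq 1$, with strict inequality on $(-\epsilon,\epsilon)$. Set
$$\mu_\epsilon(a,b):=\tfrac{1}{2}(a+b)-\tfrac{1}{2}\phi_\epsilon(a-b),$$
a smooth function satisfying $\mu_\epsilon\leq\min$ (since $\phi_\epsilon\geq|\cdot|$), with equality wherever $|a-b|\geq\epsilon$. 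Both partial derivatives $\partial_a\mu_\epsilon=\tfrac{1}{2}(1-\phi_\epsilon')$ and $\partial_b\mu_\epsilon=\tfrac{1}{2}(1+\phi_\epsilon')$ are nonnegative, and \emph{strictly} positive whenever $|a-b|<\epsilon$.

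Next I exploit transversality. Since $d\rho_1$ and $d\rho_2$ are linearly independent at every point of $S$ and $S$ is compact, there is an open neighborhood $U$ of $S$, which I may shrink into $W$, on which $d\rho_1$ and $d\rho_2$ remain linearly independent. Now choose $\epsilon>0$ so small that the ``corner tube''
$$T_\epsilon:=\{x\in\mathbb{R}^n\,:\,|\rho_1(x)|\leq\epsilon\text{ and }|\rho_2(x)|\leq\epsilon\}$$
is contained in $U$; this is possible because $T_\epsilon$ shrinks to $S$ as $\epsilon\downarrow 0$. Define $F:=\mu_\epsilon(\rho_1,\rho_2)$ and $\Omega:=\{F<0\}$.

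Finally I verify the three conclusions. For (a): $F\leq\min(\rho_1,\rho_2)$ forces $\overline{\Omega}_1\cup\overline{\Omega}_2\subset\overline{\Omega}$, while on the complement of $T_\epsilon$ one has $|\rho_1-\rho_2|\geq\epsilon$, hence $F=\min(\rho_1,\rho_2)$, so $\overline\Omega\setminus T_\epsilon=(\overline\Omega_1\cup\overline\Omega_2)\setminus T_\epsilon$, which yields $\overline\Omega\subset\overline\Omega_1\cup\overline\Omega_2\cup T_\epsilon\subset\overline\Omega_1\cup\overline\Omega_2\cup W$. The same equality gives (b) after intersecting with $\{F=0\}=b\Omega$. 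For smoothness of $b\Omega$, I check $dF\neq 0$ on $\{F=0\}$. At a boundary point outside $T_\epsilon$, $F$ coincides locally with one of $\rho_1,\rho_2$, whose differential is nonzero on its own boundary. At a boundary point in $T_\epsilon\subset U$,
$$dF=(\partial_a\mu_\epsilon)\,d\rho_1+(\partial_b\mu_\epsilon)\,d\rho_2$$
is a \emph{strictly positive} combination of the linearly independent covectors $d\rho_1,d\rho_2$, hence nonzero.

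The main obstacle is arranging that the smoothing happens only inside $W$ and that the level set $\{F=0\}$ is actually regular there; both are handled by the positivity of the partial derivatives of $\mu_\epsilon$ on the corner tube together with the transversality of $b\Omega_1$ and $b\Omega_2$ extended from $S$ to a neighborhood by continuity and compactness.
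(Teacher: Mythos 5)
Your construction is essentially the paper's: both smooth the corner of $\min(\rho_1,\rho_2)$ by replacing $|\rho_1-\rho_2|$ with a smooth majorant of $|t|$ that agrees with it away from $t=0$ (the paper uses $\sqrt{t^2+\epsilon^2\chi_\epsilon(t^2)}$ applied to signed distance functions), and both obtain regularity of the zero set from the linear independence of the two differentials on a neighborhood of $S$ together with the fact that away from the corner the defining function locally coincides with $\rho_1$ or $\rho_2$. One slip in your write-up: outside $T_\epsilon$ one need not have $|\rho_1-\rho_2|\geq\epsilon$ (e.g.\ $\rho_1=\rho_2=2\epsilon$), and the coefficients in $dF$ need not be strictly positive, but neither matters --- since your $\phi_\epsilon$ differs from $|t|$ only for $|t|<\epsilon/2$ and by at most $\epsilon/4$, any point with $F\leq 0<\min(\rho_1,\rho_2)$ lies in $T_\epsilon$, and the coefficients of $d\rho_1,d\rho_2$ are nonnegative and sum to $1$, so (a), (b) and the nonvanishing of $dF$ on $b\Omega$ follow as you intend.
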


  \begin{proof}[Proof of \Cref{L:smoothunion}]
    Let $r_j$ be the signed Euclidean distance function for $\Omega_j$, $j\in\{1,2\}$. Then
    $\Omega_j=\{x\in\mathbb{R}^n : r_j(x)<0\}$, $r_j\in\mathcal{C}(\mathbb{R}^n)$, and 
    there exists a neighborhood $U_j$ of $b\Omega_j$, such that 
    $r_j\in\mathcal{C}^\infty(U_j)$ and $\nabla r_j(x)\neq 0$ for all $x\in b\Omega_j$. Next, note that
    $$\rho(x):=\min\{r_1(x),r_2(x)\}=\textstyle\frac{1}{2}\bigl(r_1(x)+r_2(x)-\sqrt{\left(r_1(x)-r_2(x)\right)^2} \bigr)$$
    is a continuous defining function for $\Omega_1\cup\Omega_2$ which is smooth outside the 
    set $\{x\in\mathbb{R}^n : r_1(x)=r_2(x)\}$. A modification of $\rho$ on $W$ will yield a smoothly bounded,
    open set $\Omega$ with the properties (a) and (b).
  
    For that, note first that there exists an $\epsilon>0$ such that $|r_1(x)|+|r_2(x)|\leq2\epsilon$ implies that $x\in W$. 
    Also, note that we may assume that $W\subset U_1\cap U_2$.
    Next, choose a smooth function $\chi_\epsilon:\mathbb{R}^+_0\longrightarrow[0,1]$ such 
    that $\chi_\epsilon(0)=1$, $\chi_\epsilon(t)=0$ for all $t\geq\epsilon^2$, 
    and $\chi_{\epsilon}'(t)<0$ for $t\in(0,\epsilon^2)$. We consider
    $$R(x)=\textstyle\frac{1}{2}\left(r_1(x)+r_2(x)-\sqrt{\left(r_1(x)-r_2(x)\right)^2
    +\epsilon^2\chi_\epsilon\left((r_1(x)-r_2(x)\right)^2}\right),$$
    and set $\Omega=\{x\in\mathbb{R}^n: R(x)<0\}$. 
    Then $R$ is continuous on $\mathbb{R}^n$, and $R\in\mathcal{C}^\infty(U_1\cup U_2)$. 
    Observe that $\rho(x)\geq R(x)$ for all $x\in\mathbb{R}^n$. Thus, the first inclusion of part (a) holds. Next, if 
    $x\notin\overline{\Omega}_1\cup\overline{\Omega}_2$ and $x\in\overline{\Omega}$, then both $r_1(x)$ and $r_2(x)$ are positive, and
    \begin{align*}
       r_1(x)+r_2(x)\leq \sqrt{\left(r_1(x)-r_2(x)\right)^2+\epsilon^2\chi_\epsilon\left((r_1(x)-r_2(x)\right)^2}.
    \end{align*}
    The last two facts imply that
    \begin{align}\label{E:smoothunion}
       0<4r_1(x)r_2(x)\leq \epsilon^2\chi_\epsilon\left((r_1(x)-r_2(x))^2\right)\leq \epsilon^2
    \end{align} 
    This implies that the second to last term of \eqref{E:smoothunion} is positive. Hence, $|r_1(x)-r_2(x)|\leq \epsilon$. 
    Without loss of generality, $r_2(x)\leq r_1(x)\leq r_2(x)+\epsilon$. This, combined with \eqref{E:smoothunion},
    yields
    \begin{align}\label{E:Westimate}
       0<r_2(x)\leq\textstyle\frac{\epsilon}{2},\;\;\text{and}\;\;0<r_1(x)\leq \frac{3\epsilon}{2}.
     \end{align}  
     By our choice of $\epsilon$, it follows that $x\in W$, and, hence, (a) has been proven. 
  
    To prove (b), we first assume that $x\in b(\Omega_1\cup\Omega_2)\setminus W$. It follows that
    \begin{align*}
       |r_1(x)|+|r_2(x)|>2\epsilon, \;\;\text{and}\;\;
       x\in b\Omega_1\cup\overline{\Omega_2}^c\;\;\text{or}\;\;x\in b\Omega_2\cup\overline{\Omega_1}^c.
    \end{align*}   
    Hence, without loss of generality, $r_1(x)=0$ and $r_2(x)>0$, so that $|r_1(x)-r_2(x)|> 2\epsilon$, which implies that
    $R(x)=\rho(x)=0$. Therefore, $x\in b\Omega\setminus W$. Next, assume that $x\in b\Omega\setminus W$. Since $R(x)=0$, 
    it follows from the definition of $R$, that $r_1(x)+r_2(x)\geq 0$. Since $x\in W^c$ implies that $|r_1(x)|+|r_2(x)|>2\epsilon$,  
    we either may assume that $r_1(x)=0$ and $r_2(x)>0$, or get that both $r_1(x)$ and $r_2(x)$ are positive. In the first case,  
    $x\in b\Omega_1\setminus W$, while in the latter case, we already have shown in the argument leading up to
    \eqref{E:Westimate} that this implies that $x\in W$, which is a contradiction. This concludes the proof of (b).
  
    It remains to be shown that the gradient of $R$ does not vanish on $b\Omega$. First, consider $x\in W$, and note that, 
    without loss of generality, we may assume that $\nabla r_1$ and $\nabla r_2$ are linearly 
    independent at each point in $W$. Next, observe that $\nabla R(x)$ is a linear combination of $\nabla r_1(x)$ and $\nabla r_2(x)$. 
    Because of the linear independence of the two vectors, it follows that 
    $\nabla R(x)$ can only vanish if both the coefficients of $\nabla r_1(x)$ and $\nabla r_2(x)$ are zero. 
    A straightforward computation shows that this can only happen if for $t:=r_1(x)-r_2(x)$
    $$ \pm1=2t \sqrt{t^2+\epsilon^2\chi_\epsilon(t^2)}
    \left(1+ \epsilon^2\chi_{\epsilon}'(t^2)\right) $$
    holds, which is impossible since the left hand side is non-zero.
    Second, we consider the case that $x\in b\Omega\setminus W$. Then, by part (b), $R(y)$ is either $r_1(y)$ 
    for all $y$ near $x$ or $r_2(y)$ for all $y$ near $x$. 
    Since $\nabla r_i\neq 0$ on $b\Omega_i$, $i\in\{1,2\}$, the claim follows. 
  \end{proof}

  \begin{proof}[Proof of \Cref{L:smoothK}]
    We first show that there exists an open set $G_\epsilon\Subset\mathbb{R}^n$, with smooth boundary, 
    such that both (i) and (ii) hold for $\overline{G}_\epsilon\cap\mathbb{B}$ in place of $K_\epsilon$.
    By outer regularity of the Newtonian capacity, there exists an open set 
    $U_\epsilon\Subset\mathbb{R}^n$ such that $K\subset U_\epsilon$ and
    $$\ncap(U_\epsilon)\leq\ncap(K)+\epsilon.$$ It then follows from Urysohn's lemma that
    there exists an $f\in\mathcal{C}^\infty(\mathbb{R}^n)$ such that
    $f=0$ on $K$ and $f=1$ on $U_\epsilon^c$. By Sard's lemma, the image of the critical points of $f$ is of 
    Lebesgues measure $0$. Thus, we may choose a $\tau\in (0,1)$ such that
    $\nabla f(x)\neq 0$ for all $x\in\mathbb{R}^n$ with $f(x)=\tau$. Set
    $$G_\epsilon=\{x\in\mathbb{R}:f(x)<\tau\}.$$
    Then, by construction, $G_\epsilon$ is a smoothly bounded, open set. 
    Moreover, $K\subset G_\epsilon\Subset U_\epsilon$, so that monotonicity of the Newtonian capacity yields
    $$\ncap(\overline{G}_\epsilon)\leq\ncap(U_\epsilon)\leq\ncap(K)+\epsilon.$$
    After possibly slightly decreasing the value of $\tau$, within the range of $(0,1)$, we may assume that $bG_\epsilon$ 
    and $b\mathbb{B}$ intersect transversally. Let $R>0$ be such that $G_\epsilon\Subset\mathbb{B}(0,R)$. 
    Apply
    \Cref{L:smoothunion} with $\Omega_1=G_\epsilon$ and $\Omega_2=\mathbb{B}(0,R)\setminus\overline{\mathbb{B}}$. 
    It follows from outer regularity of the Newtonian capacity, 
    that we may choose a neighborhood $W$ of $b\Omega_1\cap b\Omega_2$ in \Cref{L:smoothunion}
    such that
    $$\ncap(\overline{G}_\epsilon\cup W)\leq\ncap(\overline{G}_\epsilon)+\epsilon. $$
    Let $\Omega$ be the open, smoothly bounded set constructed in \Cref{L:smoothunion} for the triple $(\Omega_1,\Omega_2,W)$.
    Set $K_\epsilon
    =\overline{\Omega}\cap\mathbb{B}$. It then follows that $\mathbb{B}\setminus K_\epsilon$ is a smoothly 
    bounded, open set, and, by property (a) of \Cref{L:smoothunion}, 
    $K_{\epsilon}\subset\overline{G}_\epsilon\cup\overline{W}\cap\mathbb{B}$. The latter implies that
    $$\ncap(K_\epsilon)\leq\ncap(\overline{G}_\epsilon)+\epsilon\leq\ncap(K)+2\epsilon,$$
    which concludes the proof.
  \end{proof}

  We are now able to prove a continuity property for the Newtonian capacity which is crucial for our proof of
  the implication (1)$\Rightarrow$(3) of \Cref{T:MainTheorem}.

  \begin{proposition}\label{P:eigenvalueconv}
     Let $\{K_j\}_{j\in\mathbb{N}}\subset\overline{\mathbb{B}(0,1)}$ be a sequence of 
     compact sets such that $\ncap(K_j)>0$ for all $j\in\mathbb{N}$, and $\lim_{j\to\infty}\ncap(K_j)=0$. 
     Suppose each $D_j=\mathbb{B}\setminus K_j$ 
     has a $\mathcal{C}^\infty$-smooth boundary. Then $\lim_{j\to\infty}\lambda_1(D_j)=\lambda_1(\mathbb{B})$.
   \end{proposition}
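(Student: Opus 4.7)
The plan is to sandwich $\lambda_1(D_j)$ between two bounds converging to $\lambda_1(\mathbb{B})$. Since $D_j\subset\mathbb{B}$ for every $j$, the monotonicity in \Cref{L:basicproplambda}(i) immediately yields $\liminf_j\lambda_1(D_j)\geq\lambda_1(\mathbb{B})$. The substance of the proof is the matching upper bound, which I would establish by constructing test functions $\psi_j\in H_0^1(D_j)$ whose Rayleigh quotients tend to $\lambda_1(\mathbb{B})$.

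For the construction, let $\psi$ be the first Dirichlet eigenfunction on $\mathbb{B}$, normalized so that $\|\psi\|_{L^2(\mathbb{B})}=1$ and $\|\nabla\psi\|_{L^2(\mathbb{B})}^2=\lambda_1(\mathbb{B})$; elliptic regularity on the smooth domain $\mathbb{B}$ ensures $\psi\in\mathcal{C}^\infty(\overline{\mathbb{B}})$, so $\|\psi\|_\infty<\infty$. Let $\mu_j$ be the equilibrium measure of $K_j$ and set $u_j:=p_{\mu_j}/I(\mu_j)$. By \Cref{L:basicproppot}, $u_j$ is superharmonic on $\mathbb{R}^n$ with $0\leq u_j\leq 1$, harmonic and smooth off $K_j$, and equal to $1$ quasi-everywhere on $K_j$; the smoothness of $bD_j$ forces $K_j$ to be regular, so $u_j$ is continuous and $u_j\equiv 1$ throughout $K_j$. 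Fixing $\chi\in\mathcal{C}_c^\infty(\mathbb{B}(0;2))$ with $\chi\equiv 1$ on $\overline{\mathbb{B}}$, I would set
\[
  \psi_j:=(1-\chi u_j)\,\psi.
\]
This function is continuous on $\overline{\mathbb{B}}$, lies in $H^1(\mathbb{B})$, and vanishes on $bD_j=b\mathbb{B}\cup bK_j$; together with the smoothness of $bD_j$, this places $\psi_j\in H_0^1(D_j)$.

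The Rayleigh-quotient estimate rests on two capacity-driven facts. First, the distributional identity $-\Delta p_{\mu_j}=c_n\mu_j$ combined with integration by parts over $\mathbb{R}^n$ (justified for $n\geq 3$ by the $|x|^{2-n}$ and $|x|^{1-n}$ decay of $p_{\mu_j}$ and $\nabla p_{\mu_j}$ at infinity) gives $\|\nabla u_j\|_{L^2(\mathbb{R}^n)}^2=c_n\ncap(K_j)$. Second, Fubini's theorem shows $\int_{\mathbb{B}(0;2)} p_{\mu_j}\,dV$ is uniformly bounded in $j$ since $\mu_j$ is a probability measure supported in $\overline{\mathbb{B}}$, whence $\int_{\mathbb{B}(0;2)} u_j\,dV\leq C\ncap(K_j)$. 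Decomposing $\nabla\psi_j=(1-\chi u_j)\nabla\psi-\psi\,\nabla(\chi u_j)$ and applying $(a+b)^2\leq(1+\delta)a^2+(1+\delta^{-1})b^2$, both contributions from $\nabla(\chi u_j)=\chi\nabla u_j+u_j\nabla\chi$ are controlled by $C\ncap(K_j)\to 0$ using $u_j\leq 1$. Thus $\|\nabla\psi_j\|_{L^2(\mathbb{B})}^2\leq(1+\delta)\lambda_1(\mathbb{B})+o(1)$, while the elementary bound $(1-\chi u_j)^2\geq 1-2\chi u_j$ yields $\|\psi_j\|_{L^2(\mathbb{B})}^2\geq 1-2\|\psi\|_\infty^2\int u_j\to 1$. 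Letting $j\to\infty$ and then $\delta\to 0^+$ gives the desired inequality $\limsup_j\lambda_1(D_j)\leq\lambda_1(\mathbb{B})$.

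The main obstacle is the global bookkeeping: one must justify the $\mathbb{R}^n$-integration by parts equating $\|\nabla p_{\mu_j}\|^2$ to $c_nI(\mu_j)$ despite the non-compact support of $p_{\mu_j}$ (a truncation to $\mathbb{B}(0;R)$ followed by $R\to\infty$ handles this), and one must confirm that the continuous function $\psi_j$ with $\psi_j=0$ on $bD_j$ genuinely belongs to $H_0^1(D_j)$ and not merely to $H^1(D_j)$. The latter is exactly where the $\mathcal{C}^\infty$-smoothness of $bD_j$ is indispensable, since it validates the trace characterization of $H_0^1(D_j)$.
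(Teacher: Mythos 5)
Your proof is correct, and its engine is the same as the paper's: the normalized equilibrium potential $u_j=p_{\mu_j}/I(\mu_j)$ is precisely the barrier $g_j$ the paper constructs, the lower bound $\lambda_1(D_j)\geq\lambda_1(\mathbb{B})$ comes from monotonicity in both cases, and the decisive smallness is $\int u_j\,dV\lesssim\ncap(K_j)\to0$, which is the paper's condition (d). Where you diverge is in how the upper bound is completed. The paper verifies only the qualitative properties (a)--(d) of $g_j$ --- positivity, harmonicity, continuity on $\overline{D}_j$, boundary values $1$ on $bD_j\cap\mathbb{B}$ and $\geq 0$ on $bD_j\cap b\overline{\mathbb{B}}$, and $L^1$-smallness --- and then defers the Rayleigh-quotient argument to \cite{GLR19} (following (3.8) there); notably, no gradient bound on $g_j$ appears in that list. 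You instead plug the explicit test function $(1-u_j)\psi$ into the Rayleigh quotient and control the extra gradient term through the capacity--energy identity $\|\nabla u_j\|_{L^2(\mathbb{R}^n)}^2=c_n/I(\mu_j)=c_n\ncap(K_j)$, which makes the proof self-contained at the cost of justifying that identity (your truncation at radius $R$ together with the fact that $p_{\mu_j}=I(\mu_j)$ holds $\mu_j$-almost everywhere does suffice). Two small remarks: the cutoff $\chi$ is superfluous, since every integral is taken over $\mathbb{B}$, where $\chi\equiv1$; and $u_j$ need not be continuous at points of $K_j\cap b\mathbb{B}$, but this is harmless because $\psi$ itself vanishes there, so $\psi_j$ is still continuous on $\overline{D}_j$ with zero boundary values, and the smoothness of $bD_j$ then indeed yields $\psi_j\in H_0^1(D_j)$ as you argue.
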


  \begin{proof}[Proof of \Cref{P:eigenvalueconv}]
     Following the arguments in \cite{GLR19}, subsequent to (3.8),
     it suffices to show that there exists a sequence $\{g_j\}_{j\in\mathbb{N}}$ of functions on $\overline{D_j}$ such that
     \begin{itemize} 
       \item[(a)] $g_j$ is positive and harmonic in $D_j$,
       \item[(b)] $g_j\in\mathcal{C}(\overline{D}_j)$,
       \item[(c)]$g_j$ equals $1$ on $b D_j\cap\mathbb{B}$ and  nonnegative
         on $bD_j\cap b\overline{\mathbb{B}}$,
       \item[(d)] $\lim_{j\to\infty}g_j=0$ in $L^1(D_j)$
     \end{itemize}
     For each $j\in\mathbb{N}$, let $\nu_j$ be the equilibrium measure for the 
     compact set $K_j$. Recall that $I(\nu_j)$ denotes the energy of the equilibrium measure, see \eqref{D:energy}, and set
     \begin{align*}
      g_j(x):=\frac{1}{I(\nu_j)}\int_{\mathbb{R}^n}|x-y|^{2-n}\;d\nu_j(y).
    \end{align*}
    That is, $g_j$  equals the potential associated to $\nu_j$ up to the multiplicative factor $1/I(\nu_j)$. 
    As such, an analogon of \Cref{L:basicproppot} holds for $g_j$. To show that $g_j$ 
    is positive on $D_j$, note that $|x-y|$ is bounded from the above by $2$ for $x\in D_j$ and $y\in K_j$. 
    Hence, $$g_j\geq \frac{2^{2-n}}{I(\nu_j)}>0 \text{ on }D_j.$$
    Furthermore, by property (i) of \Cref{L:basicproppot}, $g_j$ is harmonic on $D_j$, hence (a) holds. Of course, 
    harmonicity of $g_j$ also implies that $g_j\in\mathcal{C}^\infty(D_j)$. Since the boundary of $D_j$ is smooth, in particular 
    satisfies the cone condition of Poincar{\'e} at every boundary point, $\lim_{x\to y}g_j(x)=g_j(y)=1$ for all $y\in K_j$, 
    see Theorem~4.3 and the subsequent Remark~1 in Ch. IV in \cite{Land72}. Thus, both (b) 
    and (c) hold. To prove that $g_j\to 0$ in $L^1(D_j)$ as $j\to\infty$, we compute first for $y\in\overline{\mathbb{B}}$
    \begin{align*}
       \int_{D_j}|x-y|^{2-n}\;dV(x)&\leq\int_{\mathbb{B}}|x-y|^{2-n}\;dV(x)\\
     &\leq\int_{\mathbb{B}(y;2)}|x-y|^{2-n}\;dV(x)=\int_{\mathbb{B}(0;2)}|\zeta|^{2-n}\;dV(\zeta)=2c_n,
    \end{align*}
    where $c_n$ is the surface area of the $(n-1)$-sphere. After an application of Fubini's Theorem, it follows that
    \begin{align*} 
      \int_{D_j}g_j(x)\;dV(x)=\frac{1}{I(\nu_j)}\int_{\mathbb{R}^n}\int_{D_j}|x-y|^{2-n}\;dV(x)\;d\nu_j(y)
      \leq\frac{2c_n}{I(\nu_j)}\to 0 \text{ as }j\to\infty,
    \end{align*}
    which completes the proof.
  \end{proof}

  \begin{proof}[Proof of (1)$\Rightarrow$(3) of \Cref{T:MainTheorem}]
    Suppose that the Poincar{\'e} inequality holds on $D$, i.e., $\lambda_1(D)>0$. The proof is done by contradiction, i.e.,  
    we assume that $\rho_{D}=\infty$.
    First, let us choose an $M\geq1$ such that
    $$M^2\lambda_1(D)>\lambda_1(\mathbb{B}).$$
    Second, let $\{\epsilon_j\}_{j\in\mathbb{N}}$ be a positive sequence in $\mathbb{R}$ which converges to $0$ as $j\to\infty$.
    Then, since $\rho_{D}=\infty$, for each $j\in\mathbb{N}$, there is an $x_j\in\mathbb{R}^n$ such that
    $$\ncap\bigl(\overline{\mathbb{B}(x_j;M)}\cap D^c\bigr)<\epsilon_j/2.$$
    Set
    $$
    \mathfrak{K}_j:=\bigl\{x\in\overline{\mathbb{B}}: Mx+x_j\in \overline{\mathbb{B}(x_j;M)}\cap D^c\bigr\}, \;j\in\mathbb{N}.
    $$
    It then follows from (i) and (ii) of \Cref{L:basicpropcap} that $\ncap(\mathfrak{K}_j)\leq\epsilon_j/(2M^{n-2})$ 
    for all $j\in\mathbb{N}$. Applying \Cref{L:smoothK} with $K=\mathfrak{K}_j$ 
    and 
   $\epsilon=\epsilon_j/(2M^{n-2})$ gives a relatively compact set $K_j\subset\overline{\mathbb{B}}$ such that $\ncap(K_j)\leq\epsilon_j$ 
   and $D_j:=\mathbb{B}\setminus K_j$ is smoothly bounded. 
   Note that
   \Cref{L:basicproplambda} yields
    $$ 
    \lambda_1(D_j)=M^2\lambda_1(MD_j+x_{j})\geq M^2\lambda_1(D)\;\;\sjump\forall j\in\mathbb{N}.
    $$
    Hence, by the choice of $M$, there exists an $\epsilon>0$ such that
    $$ \lambda_1(D_j)>\lambda_1(\mathbb{B})+\epsilon\;\;\sjump\forall j\in\mathbb{N},$$
    which is a contradiction to \Cref{P:eigenvalueconv}.
  \end{proof}

  The last proof also yields the sharp upper bound for $\lambda_1(D)$ in terms of the strict Newtonian capacity inradius as stated 
  in \Cref{C:bestupperbound}.


  \section{Proof of  \texorpdfstring{(1)$\Leftrightarrow$(5)}{(1)<=>(5)}}\label{S:(1)<=>(5)}


  \begin{proof}[Proof of (1)$\Leftrightarrow$(5) of \Cref{T:MainTheorem}]
    By the remark at the beginning of \Cref{S:(1)<=>(4)}, it suffices to show that the Poincar{\'e} inequality 
    \eqref{E:PD} holds on $D$ with $C>0$ if and only if
    \begin{align}\label{E:dbarstarmCR}
      \|v\|_{L^2_{0,m}(D)}\leq C\|\dbarstar_{m-1}v\|_{L^2_{0,m-1}(D)}\;\;\sjump\forall v\in\dom(\dbarstar_{m-1})
      \cap (\ker(\dbarstar_{m-1}))^\perp.
    \end{align}
    We shall use the fact the $\Omega^{0,m}_c(D)$ is dense in $\dom(\dbarstar_{m-1})$ with respect to the graph norm
    \begin{align}\label{E:graphnorm}
      \bigl(\|v\|_{L_{0,m}(D)}^2+\|\dbarstar v\|_{L^2_{0,m-1}(D)}^2\bigr)^\frac{1}{2}, 
    \end{align}
    see Proposition 2.3 in \cite{Straube10}.
    Next, one easily computes  for $u=\sum_{j=1}^m u_j\widehat{d\bar{z}^j}\in\Omega^{0,m-1}_c(D)$, with 
    $\widehat{d\bar{z}^j}=d\bar{z}^1\wedge\ldots\wedge d\bar{z}^{j-1}\wedge d\bar{z}^{j+1}\wedge\ldots\wedge d\bar{z}^m$, and
    $v=\nu d\bar{z}^1\wedge\ldots\wedge d\bar{z}^m\in\Omega^{0,m}_c(D)$ that
    \begin{align*}
       \dbar_{m-1}u=\sum_{j=1}^m(-1)^{j+1}\frac{\partial u_j}{\partial\bar{z}_j}d\bar{z}^1\wedge\ldots\wedge d\bar{z}^n, \text{ and }
       \dbarstar_{m-1}v=\sum_{j=1}^m(-1)^j\frac{\partial \nu}{\partial z_j}\widehat{d\bar{z}^j}
    \end{align*}
    holds.
    Hence, 
    $$
    \|\dbarstar v\|_{L^2_{0,m-1}(D)}^2=(\dbar_{m-1}\dbarstar_{m-1}v,v)_{L^2_{0,m}(D)}=-\frac{1}{4}(\Delta\nu,\nu)_{L^2(D)}=
    \frac{1}{4}\|\nabla\nu\|_{L^2(D)}^2 $$
    follows, which, together with the density result, implies that \eqref{E:dbarstarmCR} holds with constant $C>0$ whenever \eqref{E:PD} does. 
  
    To show 
    the reverse implication, i.e., \eqref{E:dbarstarmCR}$\Rightarrow$\eqref{E:PD},  it now suffice to 
    show that $\ker(\dbarstar_{m-1})=\{0\}$. If $v\in\ker(\dbarstar)$, 
    with $v=\nu d\bar{z}^1\wedge\ldots\wedge d\bar{z}^m$ then, by the above mentioned density result,
    there exists a sequence $\{v_k\}_{k\in\mathbb{N}}\subset\Omega^{0,m}_c(D)$ which converges to $v$ in \eqref{E:graphnorm}. 
    But this implies that 
    $\{\nu_k\}_{k\in\mathbb{N}}\subset\mathcal{C}^\infty_c(D)$ is Cauchy in  the Sobolev space $H^1_0(D)$, 
    with $\nabla\nu_k\to 0$ and $\nu_k\to \nu$ as $k\to\infty$. Since $H_0^1(D)$ cannot contain any 
    nontrivial functions which are constant on the components of $D$, it follows that $v=0$ in $L^2_{0,m}(D)$, see 
    Lemma 2.11 and the proof of Proposition 2.9 in \cite{GLR19} for further details.
  \end{proof}

\section*{Statements and Declarations}
The author has no conflict of interest to declare.
\bibliographystyle{acm}
\bibliography{References}

\end{document}